\newtheorem{theorem}{Theorem}[section]
\newtheorem{proposition}[theorem]{Proposition}
\newtheorem{corollary}[theorem]{Corollary}
\newtheorem{lemma}[theorem]{Lemma}
\newcounter{hypA}
\newenvironment{hypA}{\refstepcounter{hypA}\begin{itemize}
  \item[{\bf A\arabic{hypA}}]}{\end{itemize}}
\newcounter{hypE}
\newcommand{\rmd}{\mathrm{d}}
\def\calC{\mathcal{C}}
\def\calB{\mathcal{B}}
\def\Obs{Y}
\def\Prop{Z}
\newcommand{\eqdef}{\ensuremath{\stackrel{\mathrm{def}}{=}}}
\def\eqsp{\;}
\newcommand{\PE}{\mathbb{E}}
\newcommand{\PP}{\mathbb{P}}
\newcommand{\asinh}{\operatorname{asinh}}
\def\Rset{\mathbb{R}}
\def\Nset{\mathbb{N}}
\def\un{\mathds{1}}
\def\1{\mathds{1}}
\newcommand\indi[1]{\1_{\{#1\}}}
\newcommand{\dimstate}{p}
\newcommand{\pscal}[2]{\left\langle #1, #2 \right\rangle}
\newcommand{\M}{\mathcal{M}}
\newcommand{\pirp}{\pi}
\newcommand{\cnoise}{\tau}
\newcommand{\ccone}{\epsilon}
\newcommand{\cgaus}{k_2}
\newcommand{\vgaus}{\sigma_2}
\newcommand{\cigaus}{k_1}
\newcommand{\vigaus}{\sigma_1}
\newcommand{\dom}{\rmd \nu}
\newcommand{\sign}{\ \textrm{sign}}
\newcommand{\expvec}[2] {#1^{[#2]}}
\newcommand{\shrinkvec}[2] {#1_{[#2]}}
\newcommand{\idxmodel}[1]{\kappa^{[#1]}}
\newcommand{\design}{G}
\newcommand{\state}{X}
\newcommand{\noise}{E}
\newcommand{\noiseprop}{\xi}
\newcommand{\constV}{\beta}
\newcommand{\trunckernel}{P}
\newcommand{\sto}{\Psi}
\newcommand{\stog}{\Psi_1}
\newcommand{\stos}{\Psi_2}
\def\gauss{\mathcal{N}}
\newcommand{\coint}[1]{\left[#1\right)}
\newcommand{\ooint}[1]{\left(#1\right)}
\def\iff{if and only if} 
\def\hstos{t} 
\begin{document}

\title{A Shrinkage-Thresholding Metropolis adjusted Langevin algorithm for Bayesian Variable Selection}

\author{Amandine Schreck\footnotemark[1], Gersende Fort\footnotemark[1], 
Sylvain Le Corff\footnotemark[2],
Eric Moulines\footnotemark[1]}

\footnotetext[1]{LTCI, Telecom ParisTech \& CNRS, 46 rue Barrault 75634 Paris Cedex 13 France}
\footnotetext[2]{Laboratoire de Math\'ematiques, Universit\'e Paris-Sud and CNRS, UMR 8628, Orsay, France}

\maketitle

\begin{abstract}
This paper introduces a new Markov Chain Monte Carlo method for Bayesian
  variable selection in high dimensional settings. The algorithm is a
  Hastings-Metropolis sampler with a proposal mechanism which combines a
  Metropolis Adjusted Langevin (MALA) step to propose local moves associated
  with a shrinkage-thresholding step allowing to propose new models.  The
  geometric ergodicity of this new trans-dimensional Markov Chain Monte Carlo
  sampler is established.  An extensive numerical experiment, on simulated and
  real data, is presented to illustrate the performance of the proposed
  algorithm in comparison with some more classical trans-dimensional
  algorithms.
\end{abstract}

\section{Introduction}
We focus on variable selection
in regression problems: the objective is to explain a
response variable with a (possibly very) large number of explanatory variables,
which can be either discrete or continuous. In many applications, it is known that only a small
fraction of explanatory variables explains a large fraction of the
observations, and using this information is crucial for inference. 
Variable selection is particularly challenging in high dimensional settings.

A variety of algorithms to explore the collection of models and criteria for
selecting among competing models has been proposed.  In the Bayesian framework,
the variable selection problem is transformed into posterior inference: rather
than searching a highly hypothetical "best" model, Bayesian analysis attempts
to estimate the joint posterior distribution of the collection of all subsets
of parameters. In high dimension, this aim is often overly ambitious:
estimating the marginal posterior probability that a variable should be
included in the model is already challenging.

In the last three decades, Markov Chain Monte Carlo (MCMC) methods have been the most
commonly used computational procedures to sample posterior distributions
\cite{robert:casella:1999}. 
An early attempt to perform variable selection is the Reversible Jump MCMC
(RJMCMC) introduced in \cite{green:1995}.
RJMCMC is a trans-dimensional sampler which produces a
Markov chain evolving between spaces of different dimensions. The dimension of
the sample varies at each iteration as active (nonzero) parameters are added or discarded
from the model. Each new sample is accepted or rejected using a
Metropolis-Hastings step where the acceptance probability is adjusted to the
trans-dimensional moves. RJMCMC requires ingenuity in designing appropriate
jumping rules to produce computationally efficient and theoretically effective
methods. Despite many
attempts~\cite{brooks:etal:2003,karagiannis:andrieu:2013}, this algorithm is
prone to fail when the dimension of the parameter space is large (as illustrated in our numerical section).

\cite{carlin:chib:1995} considers another setting that encompasses all the
models jointly: at each iteration, pseudo-prior distributions are used to
jointly sample regression parameters associated with all models. For high
dimensional statistical problems, sampling jointly all models is of course out
of reach. A more efficient algorithm, the Metropolized Carlin and Chib (MCC),
simultaneously proposed by
\cite{godsill:2001,dellaportas:forster:ntzoufras:2002} and later improved
by~\cite{petralias:dellaportas:2013}, does not require to sample from the whole
collection of models and therefore can be implemented in practice. The mixing
of this algorithm depends critically on the specification of pseudo-priors,
which requires also a fair amount of tuning.

Other MCMC approaches for Bayesian variable selection define a posterior distribution on the model space, where a model is a binary vector locating the active (nonzero) components of the regression vector. The objective is 
to estimate probabilities of activation for each regression parameter. In~\cite{brown:fearn:vannucci:2001} for example, this exploration is performed with a Gibbs sampler. Variants and adaptive versions of the Gibbs sampler for this problem have been proposed in \cite{nott:kohn:2005,lamnisos:etal:2013}. Samples from the posterior distribution of the models are obtained in~\cite{shi:dunson:2011} and in \cite{schafer:chopin:2013} with particle filters.

In this paper, we introduce a novel algorithm, the Shrinkage-Thresholding
Metropolis-Adjusted Langevin Algorithm (STMALA) to perform sparse regression in
high dimensional models.  This algorithm might be seen as a trans-dimensional
MCMC method relying on the MALA algorithm (see \cite{roberts:tweedie:1996b}).
The proposal distribution in the STMALA algorithm goes as follows:
\begin{itemize}
\item compute  a noisy gradient step  of the logarithm of  the smooth part of the target distribution;
\item apply a shrinkage-thresholding operator to ensure sparsity and to shrink
  values of the regression parameters toward zero;
\item use an accept-reject step  to guarantee the convergence to the correct target distribution.
\end{itemize}
Each iteration of the STMALA algorithm may be seen as a randomized version of
the Shrinkage-Thresholding algorithm (see~\cite{beck:teboulle:2009}) to guide
variable selection.  The Shrinkage-Thresholding algorithm (and its accelerated
version FISTA) is one of the most effective method to solve sparse inverse
problems.  Our intuition is that a single iteration of the
Shrinkage-Thresholding algorithm (with some additional noise added to ensure
irreducibility) is a sensible way to visit collection of models.  This
intuition is supported both by very promising experimental results obtained in
a variety of challenging situations and by some theoretical results. In
particular, we have established the geometric ergodicity of the STMALA
algorithm for a large class of target distributions. To our best knowledge, it
is the first result providing a rate of convergence for a trans-dimensional
MCMC algorithm (like RJMCMC and MCC); usually, only Harris recurrence is
proved, see \cite{roberts:rosenthal:2006}.

Our algorithm is closely related to the proximal MCMC algorithm of
\cite{pereyra:2015}; the main difference stems from the fact that our algorithm
is designed to sample jointly the models and their parameters, whereas
\cite{pereyra:2015} is a method to sample from high-dimensional posterior
distributions with sparsity inducing priors.

This paper is organized as follows. STMALA and its application to Bayesian
variable selection is described in Section~\ref{sec:PMALA}.  The geometric
ergodicity of the STMALA algorithm is addressed in
Section~\ref{sec:ergodicity}.  Numerical experiments on simulated and real data
sets to assess the performance of STMALA are given in Section~\ref{sec:exp}.
All the proofs are postponed to Section~\ref{PMALA:sec:proofs}.

\section{The Shrinkage-Thresholding MALA algorithm}
\label{sec:PMALA}
This section introduces the Shrinkage-Thresholding MALA algorithm which is
designed to sample from a target distribution defined on $\Rset^{\dimstate}$,
$\dimstate\in \Nset^*$. Denote by $\M
\eqdef\{0,1\}^\dimstate$ the set of binary vectors.  For any $m = (m_1, \dots,
m_\dimstate) \in \M$, set
\begin{equation}
\label{eq:def:Im}
I_m \eqdef \{i \in \{1, \cdots, \dimstate\};\;  m_i = 1 \} \eqsp,
\end{equation}
the family of active, i.e. nonzero, variables.  
For any $m \in \M$, denote by $S_m$ the subset of $\Rset^{\dimstate}$ defined
by
\begin{equation}
\label{eq:def:sm}
 S_m \eqdef \{z \in \Rset^{\dimstate}, z_{i} \neq 0, i \in I_m, z_{j} = 0, j \not \in I_m \}
\end{equation}
and by $|m| \eqdef \sum_{i=1}^\dimstate m_i$ the number of non-zero components
in $m$.  $\{S_m, m \in \M\}$ is a partition of $\Rset^\dimstate$ and we assume
that the target distribution may be written as
\begin{equation}
\label{eq:target-distribution}
\pi( \rmd x) = \sum_{m \in \M} \omega_m \pi_m(x) \un_{S_m}(x)   \nu_m(\rmd x) \eqsp,
\end{equation}
where $\{\omega_m, m \in \M\}$ is the prior probability of the models and
$\pi_m(x) \nu_m(\rmd x)$ is the distribution of $x$ conditionally to the model
$m$. We consider situations when $\nu_m(\rmd x) = \prod_{i \in I_m} \rmd x_i
\prod_{j \notin I_m} \delta_0(\rmd x_j)$ and $\pi_m(x) \propto \exp(- U_m(x) -
V_m(x))$ with $x \mapsto U_m(x)$ continuously differentiable and $x \mapsto V_m(x)$ possibly
non-smooth (a penalization term).

Two different shrinkage-thresholding operators are considered to sample sparse
vectors, namely the Proximal one (Prox) $\stog$ and the soft thresholding
operator with vanishing shrinkage (STVS) $\stos$: for any $\gamma>0$,
$1\leq i\leq \dimstate$ and $u = (u_1, \cdots, u_\dimstate) \in
\Rset^\dimstate$,
\begin{align}
\label{eq:expression-psi}
  &\left( \stog(u) \right)_{i} \eqdef  u_{i} \left(1- \gamma/|u_{i}|
  \right)_+\eqsp, \\
  &\left( \stos(u) \right)_{i} \eqdef u_{i} \left( 1 -  \gamma^2/|u_{i}|^2 \right)_+ \eqsp,
\end{align}
where for $a \in \Rset$, $a_+$ denotes the positive part of $a$: $a_+ \eqdef \max(a,0)$.
\begin{figure}[ht!]
 \begin{center}
   \includegraphics[scale=.3]{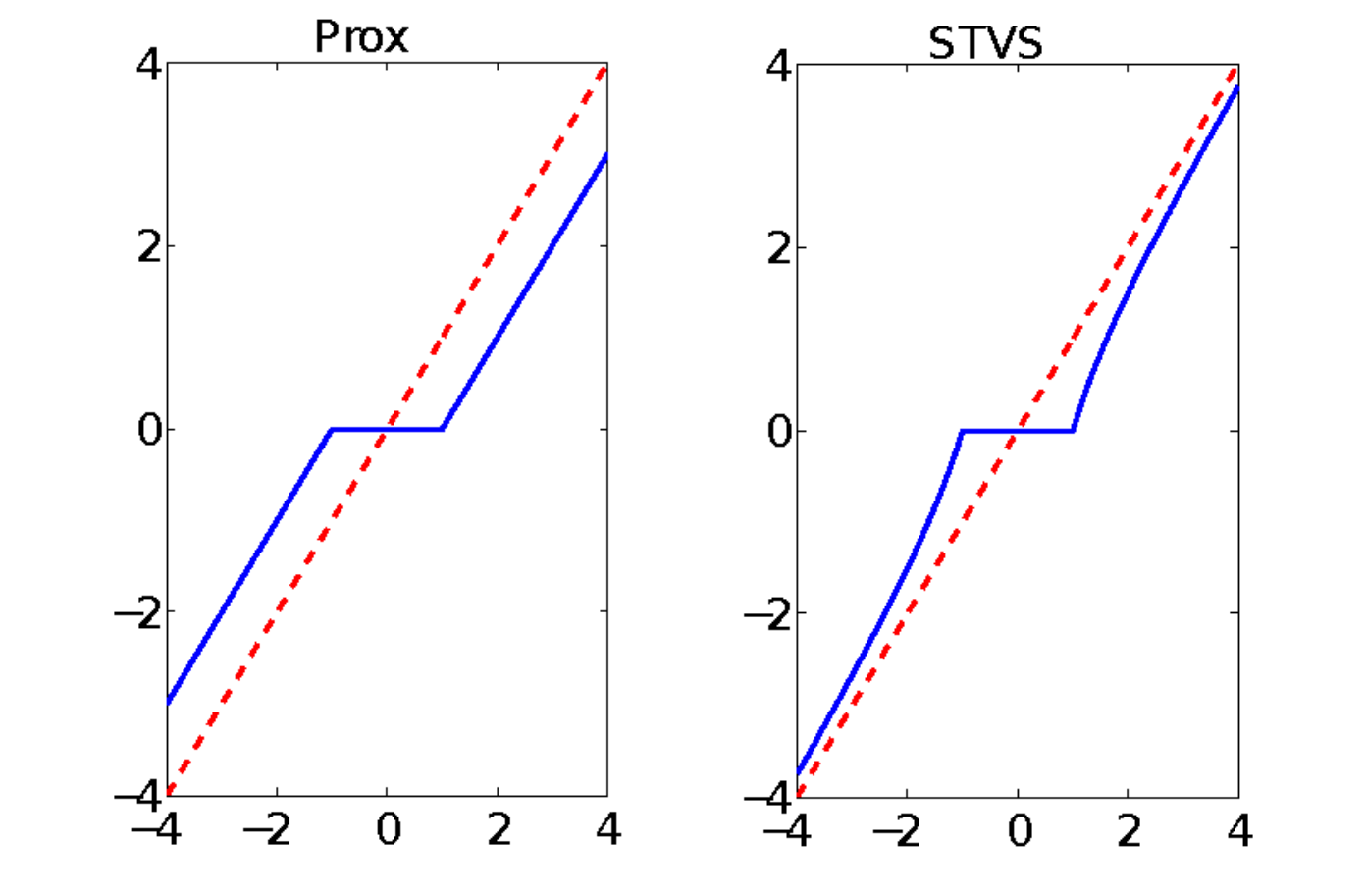}
 \end{center}
     \caption{Continuous line : Shrinkage-Thresholding functions associated with Prox (left) and STVS (right) in one dimension. Dashed lines : the identity function.} \label{fig:toyex:fct_seuil:d1}
\end{figure}
Lemma~\ref{STMALA:prox} shows that the soft thresholding operator with
vanishing shrinkage $\stos$, known as the empirical Wiener operator
(see~\cite{siedenburg:2012}), compromises between minimizing a (non-convex)
function $h$ and being close to $u$.
\begin{lemma} \label{STMALA:prox}
For any $\gamma>0$ and $u \in \Rset^\dimstate$,
\[
\stos(u) = \mathrm{argmin}_{x \in \Rset^\dimstate} \left( \hstos(x)  +  \frac{1}{2} \|x -u\|^2 \right)\eqsp,
\]
where
 \begin{equation*}
\hstos(x) = \gamma^2 \left[ \asinh \left( \|x\|/(2 \gamma) \right) - (1/2) \exp \left(-2   \asinh \left( \|x\|/(2 \gamma) \right) \right) \right] \eqsp.
 \end{equation*}
\end{lemma}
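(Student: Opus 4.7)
The objective $F(x) := h(x) + \tfrac{1}{2}\|x-u\|^2$ is rotationally invariant in $x$ through $h$, since $h(x) = \phi(\|x\|)$ with $\phi(r) := \gamma^2\bigl[\asinh(r/(2\gamma)) - \tfrac{1}{2}\exp(-2\asinh(r/(2\gamma)))\bigr]$. Coercivity of the quadratic term (and the fact that $\phi$ is bounded below) guarantees that a global minimizer exists. For fixed $r \geq 0$, the restriction of $\tfrac{1}{2}\|x-u\|^2$ to the sphere $\{\|x\|=r\}$ is minimized at $x = r u/\|u\|$ when $u\neq 0$ (with value $\tfrac{1}{2}(r-\|u\|)^2$). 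So the plan is to first reduce the problem to a one-dimensional minimization of $\psi(r) := \phi(r) + \tfrac{1}{2}(r-\|u\|)^2$ on $[0,\infty)$, then identify the unique minimizer, then show it agrees with $\stos(u)$.

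The key computation is the closed form for $\phi'$. Setting $a := \asinh(r/(2\gamma))$, so that $\sinh a = r/(2\gamma)$, $\cosh a = \sqrt{1 + r^2/(4\gamma^2)}$ and $da/dr = 1/(2\gamma \cosh a)$, one has $1+e^{-2a} = 2e^{-a}\cosh a$. Hence
\[
\phi'(r) \;=\; \gamma^2 \, (1+e^{-2a}) \cdot \frac{1}{2\gamma \cosh a} \;=\; \gamma\, e^{-a}.
\]
This is the main technical simplification; the rest is straightforward.

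From this, $\psi'(r) = \gamma e^{-a} + r - \|u\| = \gamma e^{-a} + 2\gamma \sinh a - \|u\| = \gamma e^a - \|u\|$. The map $r \mapsto e^{\asinh(r/(2\gamma))}$ is strictly increasing on $[0,\infty)$, so $\psi'$ is strictly increasing with $\psi'(0) = \gamma - \|u\|$ and $\psi'(r)\to\infty$ as $r\to\infty$. Two cases:
\begin{itemize}
\item If $\|u\| \leq \gamma$, then $\psi'(r)\geq 0$ on $[0,\infty)$, so $\psi$ is non-decreasing and the unique minimizer is $r^\star = 0$. The corresponding $x^\star = 0$ equals $\stos(u)$.
\item If $\|u\| > \gamma$, there is a unique $a^\star > 0$ with $\gamma e^{a^\star} = \|u\|$, i.e.\ $a^\star = \log(\|u\|/\gamma)$. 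Then $r^\star = 2\gamma\sinh a^\star = \|u\| - \gamma^2/\|u\|$, and $\psi'$ strictly increasing forces $r^\star$ to be the global minimizer. The corresponding $x^\star = r^\star u/\|u\| = u\bigl(1 - \gamma^2/\|u\|^2\bigr)$ again equals $\stos(u)$.
\end{itemize}

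The only potentially delicate point is the reduction to the one-dimensional radial problem when $u = 0$: in that case $F(x) = \phi(\|x\|) + \tfrac{1}{2}\|x\|^2$ depends only on $\|x\|$, and the same analysis (treating $\|u\|=0 < \gamma$) yields $x^\star = 0 = \stos(0)$. I expect the main obstacle not to be conceptual but rather the manipulation of nested $\asinh/\exp$ expressions leading to the clean identity $\phi'(r) = \gamma e^{-\asinh(r/(2\gamma))}$; once that is in hand, both the case split and the matching with the definition of $\stos(u)$ fall out immediately.
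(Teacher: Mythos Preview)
Your proof is correct and follows essentially the same strategy as the paper: reduce to a one-dimensional radial problem via rotational invariance, compute the derivative of the radial part, and do the case split $\|u\|\le\gamma$ versus $\|u\|>\gamma$. The only cosmetic differences are that the paper first treats $\ell=1$ and then reduces $\ell>1$ to it, and that the paper writes the derivative in the algebraic form $h'(r)=\tfrac{1}{2}\bigl(-r+\sqrt{r^2+4\gamma^2}\bigr)$, which is exactly your $\gamma e^{-\asinh(r/(2\gamma))}$; your hyperbolic substitution is a tidy way to arrive at the same identity and makes the monotonicity of $\psi'(r)=\gamma e^{a}-\|u\|$ immediate.
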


\begin{proof}
The proof  is postponed to Section~\ref{proof:lem:STMAL:prox}.
\end{proof}
From a current state $ \state^{n} \in \Rset^\dimstate$ the algorithm proposes a
new point $\Prop$ defined by
\begin{equation}
\label{eq:proposal:algo}
\Prop = \sto \left( \state^n + \frac{\sigma^2}{2} h(\state^n) + \sigma \noiseprop^{n+1}  \right)\eqsp,
\end{equation}
where $\sigma > 0$, $\noiseprop^{n+1}\sim \gauss_\dimstate(0,I)$ and for all $x
\in \Rset^\dimstate$ and $D > 0$,
\begin{equation}
\label{eq:definition-h}
h(x) \eqdef \sum_{m \in \M} \1_{S_m}(x) \frac{D \, \nabla U_m(x)}{D \vee \left\|  \nabla U_m(x) \right\|}\eqsp,
\end{equation}
with $a \vee b= \max(a,b)$.  The following lemma shows that this proposal
mecanism is equivalent to sampling a new binary vector $m'\in \M$ conditionally
to $x$; and then sampling a new vector with non null components in
$\Rset^{|m'|}$ conditionally to $(m',x)$.  Define
\begin{equation}
\label{eq:local:mu}
\mu(x) \eqdef x + \sigma^2 h(x)/2 \eqsp.
\end{equation}
\begin{lemma}
\label{PMALA:lem:propnorm21}
Let $x\in\Rset^\dimstate$, $D,\gamma,\sigma>0$. Let $\sto \in \{\sto_1, \sto_2
\}$. The random vector $ \sto(\mu(x) + \sigma \noiseprop )$ where $\noiseprop
\sim \gauss_\dimstate(0,I)$ has a density with respect to $ \sum_{m \in \M}
\nu_m$ given by $z \mapsto q_{\sto}(x,z)$ with
\begin{equation} \label{eq:proposal:L21gradientproximal}
  q_{\sto}(x,z) \1_{S_m}(z)= \left( \prod_{i \notin I_m} \rho( \mu_{i}(x) )
  \right) \times \left( \prod_{i \in I_m} f_{\sto}( \mu_{i}(x), z_{i}) \right)
\end{equation}
where for any $c\in \Rset$ and $z \in \Rset^\star$
\begin{align*}
&\rho(c) \eqdef \PP\left\{|c+\sigma \zeta| \leq \gamma\right\}\eqsp,\; \mathrm{with}\; \zeta\sim \gauss(0, 1)\eqsp,\\
&f_{\stog}(c, z) \eqdef\left( 2 \pi \sigma^2 \right)^{-1/2}  \times \exp \left\{-\left| \left( 1 + \gamma |z|^{-1}
      \right){z}-c\right|^2 / (2\sigma^2) \right\}\eqsp;
\end{align*}
and
\begin{equation*}
f_{\stos}(c, z) \eqdef\left( 2 \pi \sigma^2 \right)^{-1/2}
    g\left(\gamma^2 |z|^{-2}\right)  \tilde g\left(\gamma^2 |z|^{-2}\right)  \exp \left(-\left| g\left(\gamma^2 |z|^{-2}\right) \ {z}-c\right|^2/(2\sigma^2) \right) \  \eqsp,
\end{equation*}
with
\[
g(u)\eqdef \frac12 (1+\sqrt{1 + 4u}) \eqsp,  \tilde g(u) \eqdef
  \frac{1}{\sqrt{1+4u}} \eqsp,
\]
\end{lemma}
\begin{proof}
The proof  is postponed to Section~\ref{sec:proofs:pmala:etc}.
\end{proof}
For any $x$, $z \mapsto q_\sto(x,z)$ consists in
\begin{enumerate}[(i)]
\item sampling each component of a new model $m' \in \M$ as independent
  $\{0,1\}$-Bernoulli random variable with success parameter $\rho(\mu_i(x))$,
  $1 \leq i \leq p$;
\item for $i \notin I_{m'}$, set $z_{i} = 0$; conditionally to $(m', x)$,
  sample independent components such that for any $i \in I_{m'}$, the
  distribution of $z_{i}$ on $\Rset^{\star}$ is $f_{\sto}(\mu_i(x),z_i)$.
\end{enumerate}
The proposal \eqref{eq:proposal:algo} is then accepted and $X^{n+1} =Z$ with
probability $\alpha_\sto(\state^n, \Prop)$ given by
\begin{equation}\label{eq:expression:alpha}
\alpha_{\sto}(x, z) \eqdef 1 \wedge \frac{ \pirp(z) \, q_{\sto}(z, x)}{\pirp(x)
  \, q_{\sto}(x, z)} \eqsp;
\end{equation}
otherwise, $X^{n+1} = X^{n}$.  In high dimensional settings, STMALA may
encounter some difficulties to accept the proposed moves. Following
\cite{neal:roberts:2006}, we introduce a variant of the algorithm in which only
a fixed number $\eta$ of components of $\state^n$ is updated at each iteration
$n$. This is achieved by combining STMALA and a Gibbs sampler in a
STMALA-within-Gibbs algorithm.

\section{$V$-Geometric ergodicity of the $L_{1}$ proximal STMALA}
\label{sec:ergodicity}
In this section, we address the $V$-geometric ergodicity of the STMALA chain
$(X^n)_{n\geq 0}$ under the following assumptions: for any $m \in \M$,
\begin{hypA} \label{hyp:reg:pi}
 \begin{enumerate}[(i)]
 \item \label{hyp:reg:pi:pos} $\omega_m>0$ and $\pi_m>0$ on $S_m$.
 \item \label{hyp:reg:pi:cont} $\pi_m$ is continuous on $S_m$.
 \item \label{hyp:reg:pi:lim} $\pi_m(x) \1_{S_m}(x) \to 0$ when $\|x\| \to
   \infty$.
 \end{enumerate}
\end{hypA}
\begin{hypA} \label{hyp:superexp:pi}
  for any $s>0$,
\begin{align*}
  \lim_{r \to \infty} \sup \limits_{x \in S_m, \|x\| \geq r} \pi_m(x + s
    \, n(x))/\pi_m(x) = 0\eqsp,
\end{align*}
where $n(x) \eqdef x/\|x\|$.
\end{hypA}
Let $b,\epsilon>0$ and $u \in (0, b)$. For any $m \in \M$ and $x \in S_m$,
define
 \begin{equation} \label{eq:definition:cone}
   W_m(x) \eqdef \{ (\|x\|-u) n(x) - s \zeta: s \in (0,b-u)\;;\;\zeta \in S_m, \|\zeta\| =1, \|\zeta-n(x)\| \leq \ccone \} \eqsp.
 \end{equation}
 $W_m(x)$ is the cone of $S_m$ with apex $x - u \ n(x)$ and aperture $2
 \epsilon$.  We will prove (see Lemma~\ref{lemme:cone}) that A\ref{hyp:cone:pi}
 guarantees that,  the probability to accept a move
 from $x$ to any point of $W_m(x)$ converges to one as $\| x \|$ goes to infinity.
\begin{hypA} \label{hyp:cone:pi}
  There exist $b,R,\ccone>0$ and $u \in (0,b)$ such that for any $m \in \M$,
  for any $x \in S_m \cap \{\|x\| \geq R\}$, for all $y \in S_m \cap W_m(x)$:
  $\pi_m(x - u \ n(x)) \leq \pi_m(y)$.
\end{hypA}
When for any $m \in \M$, $\pi_m$ is differentiable on $S_m$,
A\ref{hyp:superexp:pi} and A\ref{hyp:cone:pi} are satisfied if
(see for details), for all $m \in \M$,
\begin{align*} 
  & \lim \limits_{x \in S_m, \|x\| \to \infty} \pscal{n(x)}{\nabla \log (\pi_m (x))} = - \infty \eqsp, \\
  & \limsup \limits_{x \in S_m, \|x\| \to \infty} \pscal{n(x)}{n(\nabla
    \pi_m(x))} <0 \eqsp;
\end{align*}
(see ~\cite[Section 4 and the proof of Theorem 4.3]{jarner:hansen:2000} for
details).
 
Let $\trunckernel_{\sto}$ denote the transition kernel associated to the
Hastings-Metropolis move with proposal (\ref{eq:proposal:algo}) and
acceptance-rejection ratio (\ref{eq:expression:alpha}).
\begin{theorem}
\label{th:erggeo}
Assume A\ref{hyp:reg:pi}-\ref{hyp:cone:pi} hold. Then, for any $\sto \in
\{\sto_1, \sto_2\}$, for any $\constV \in (0,1)$, there exist $C >0$ and
$\lambda \in (0,1)$ such that for any $ n \geq 0$ and any $x \in
\Rset^\dimstate$,
 \begin{align}
  \|\trunckernel_{\sto}^n(x,.) - \pi \|_V \leq C \, V(x) \, \lambda^n \eqsp,
 \end{align}
 where $V(x) \propto \pi(x)^{-\constV}$ and for any signed measure $\eta$,
 $\|\eta\|_V \eqdef \sup \limits_{f, |f| \leq V}|\eta(f)|$.
\end{theorem}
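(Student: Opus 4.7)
My plan is to verify the hypotheses of the standard Meyn--Tweedie geometric drift criterion for $V$-geometric ergodicity with Lyapunov function $V(x) = c \pi(x)^{-\constV}$: (a) $\phi$-irreducibility and aperiodicity of $\trunckernel$; (b) every compact set is petite; (c) a drift inequality $\trunckernel V \leq \rho V + b \un_C$ for some compact set $C$ and $\rho \in (0,1)$. Geometric ergodicity of $\trunckernel$ in the $V$-norm then follows by the classical Meyn--Tweedie theorem.

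Steps (a) and (b) are mostly bookkeeping given the explicit mixture form of the proposal density in Lemma~\ref{PMALA:lem:propnorm21}. For any $x \in \Rset^P$ and any $m' \in \M$, the proposal $Z$ falls in $S_{m'}$ with strictly positive probability and, conditionally, has a continuous strictly positive density with respect to Lebesgue measure on $S_{m'}$. Combined with $\alpha(x,z)>0$ whenever $\pi(z)>0$, this yields $\phi$-irreducibility with $\phi = \pi\,\rmd\nu$. Aperiodicity follows from the positive probability of the rejection atom at $x$. Since $\M$ is finite, small sets are built by taking compact subsets of each $S_m$; the minorizing measure comes from the continuity and positivity of the proposal density on each $S_m$, and a finite union of such compacts exhausts any Lyapunov sublevel set.

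The heart of the proof is (c), which I would obtain via an intermediate cone lemma: there exist $R_1 \geq R$ and $c_0 > 0$ such that for every $m \in \M$ and every $x \in S_m$ with $\|x\|_2 \geq R_1$, the proposal $Z$ lies in $S_m$ with $Z_{m\cdot} \in W_m(x)$ with probability at least $c_0$, and the acceptance ratio $\alpha(x,Z)$ equals $1$ on that event. The probability lower bound exploits the explicit form of $q_{\stog}$ restricted to $S_m$ (a shifted Gaussian weighted by the $(1+\gamma/\|z\|_2)^{T-1}$ factor) together with the fact that the gradient truncation in~(\ref{eq:STMALA:truncated}) keeps the proposal mean within a fixed distance of $x$, so the cone catches at least a fixed fraction of the Gaussian mass uniformly in $x$. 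The acceptance-one statement combines A\ref{hyp:cone:pi}, which gives $\pi_m(z) \geq \pi_m(x - u\,n(x))$ for $z_{m\cdot} \in W_m(x)$, with super-exponentiality A\ref{hyp:superexp:pi}, which forces $\pi_m(x - u\,n(x))/\pi_m(x) \to \infty$ as $\|x\|_2 \to \infty$; and since the truncated drift keeps the Gaussian exponents in both $q_{\stog}(x,z)$ and $q_{\stog}(z,x)$ bounded on $W_m(x)$ (where $\|x-z\|_2$ is itself bounded by $b$), the ratio $q_{\stog}(z,x)/q_{\stog}(x,z)$ is bounded below by a positive constant not depending on $\|x\|_2$, so $\pi(z)q_{\stog}(z,x)/(\pi(x)q_{\stog}(x,z)) \to \infty$ and $\alpha(x,z) = 1$ for $\|x\|_2$ large. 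Given this cone lemma, on $W_m(x)$ one has $V(z)/V(x) = (\pi_m(x)/\pi_m(z))^{\constV} \leq (\pi_m(x)/\pi_m(x - u\,n(x)))^{\constV} \to 0$ uniformly, whence
\begin{equation*}
\trunckernel V(x) = V(x) - \int \alpha(x,z)\bigl(V(x)-V(z)\bigr)\, q_{\stog}(x, \rmd z) \leq \bigl(1 - c_0 + o(1)\bigr) V(x),
\end{equation*}
which gives the drift inequality outside a large enough ball with any $\rho \in (1-c_0, 1)$; the compact complement is absorbed in the additive constant $b$ using the upper bound $\sup \pi < \infty$ from A\ref{hyp:definition:pi}.

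The main obstacle will be the density-ratio step inside the cone lemma. Because STMALA is transdimensional, the density $q_{\stog}$ of Lemma~\ref{PMALA:lem:propnorm21} is a mixture of Dirac and absolutely continuous parts, and comparing $q_{\stog}(z,x)$ with $q_{\stog}(x,z)$ requires careful tracking of which coordinates get shrunk to zero in the forward versus the reverse move; it is precisely the gradient truncation factor $D/\max(D, \|\nabla g(x)\|_2)$ in~(\ref{eq:STMALA:truncated}) that makes this tractable, by bounding the proposal mean uniformly in $\|x\|_2$ and thereby preventing the $q$-ratio from collapsing to zero along the cone.
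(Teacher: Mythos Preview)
Your overall architecture---irreducibility and aperiodicity, small sets built from compacts in each $S_m$, then a Foster--Lyapunov drift via a cone on which acceptance is one---matches the paper's. Your cone lemma is essentially Lemma~\ref{lemme:cone}, and your use of the gradient truncation to bound the proposal ratio $q(z,x)/q(x,z)$ on the cone is exactly how the paper proceeds (Lemmas~\ref{lemme:bgaus} and~\ref{lem:minoration:probap}).

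The gap is the displayed drift inequality in your step~(c). From the identity $\trunckernel V(x) = V(x) - \int \alpha(x,z)\bigl(V(x)-V(z)\bigr)\,q(x,\rmd z)$, the cone lemma only controls the contribution of $W_{m_x}(x)$, where indeed $\alpha=1$ and $V(z)/V(x)\to 0$. But outside the cone, on $\{z:V(z)>V(x)\}$, the integrand $\alpha(x,z)(V(x)-V(z))$ is \emph{negative}, and you have not argued that this negative contribution is $o(V(x))$. The truncation bounds only the proposal \emph{mean}, not the sampled $z$; and the two Gaussian sandwich bounds (Lemma~\ref{lemme:bgaus}) involve different variances, so $q(z,x)/q(x,z)$ is not uniformly bounded. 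Hence ``$\trunckernel V(x)\leq (1-c_0+o(1))V(x)$'' does not follow from the cone argument alone.

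The paper closes this gap with a separate lemma (Lemma~\ref{lemme:tm}) showing that the accepted-move term $\int \alpha(x,z)\,V(z)/V(x)\,q(x,\rmd z)\to 0$, via a Jarner--Hansen four-region decomposition (far from $x$; a thin tube around the level set $\{\pi=\pi(x)\}$; acceptance region outside the tube; rejection region outside the tube), each piece controlled using A\ref{hyp:superexp:pi} together with the pointwise bound $\alpha(x,z)\,\pi^{-\beta}(z)/\pi^{-\beta}(x)\leq (q(z,x)/q(x,z))^\beta$. Only then is the cone lemma invoked, and only to show that the rejection mass $\int_{R(x)} q(x,y)\,\rmd\nu(y)$ stays strictly below~$1$ (Lemma~\ref{lemme:remaining}). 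Your plan needs an analogue of this accepted-move estimate; what you flag as ``the main obstacle'' (the $q$-ratio on the cone) is in fact the easier half of the drift argument.
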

\begin{proof}
  By definition of the acceptance-rejection ratio, $\pi$ is invariant with
  respect to $\trunckernel_{\sto}$.  The $V$-uniform geometric ergodicity
  follows from Proposition~\ref{prop:small} and Proposition~\ref{prop:drift}
  given in Section~\ref{sec:proofs:erg}: Proposition~\ref{prop:small}
  establishes that the chain is psi-irreducible and aperiodic and shows that
  any Borel set $C \subset \Rset^\dimstate$ such that $C \cap S_m$ is a compact
  subset of $S_m$ is a small set for $\trunckernel_{\sto}$;
  Proposition~\ref{prop:drift} shows that there exists an accessible small set
  $C \subset \Rset^\dimstate$ and constants $c_1 \in (0,1)$ and $c_2<\infty$
  such that for any $x \in \Rset^\dimstate$, $\trunckernel_{\sto} V(x) \leq c_1
  V(x) + c_2 \un_{C}(x)$.  The proof is then concluded by \cite[Theorem
  15.0.2]{meyn:tweedie:1993}.
\end{proof}

\section{Numerical illustrations}
\label{sec:exp}
In this section, STMALA\footnote{MATLAB codes for STMALA are available at the
  address http://www.math.u-psud.fr/$\sim$lecorff/software.html} is compared to
the reversible jump Markov chain Monte Carlo (RJMCMC) algorithm. For any
$\ell\times\ell'$ matrix $A$ and any $1\le j \le \ell$, $1\le k \le \ell'$,
$A_{\cdot,k}$ (resp. $A_{j,\cdot}$) denotes the $k$-th column (resp. the $j$-th
row) of $A$. In all the sequel, only the performance of STMALA with $\stos$ is
considered due to lack of space. It has been experimentally observed in all the
considered scenarios that $\stos$ performs significantly better than $\stog$,
because it avoids to shrink the significative components of $x$.

In the examples below, $\pi$ is the posterior distribution of a regression
vector in a logistic regression model; $\pi_m$ is the conditional distribution
of the regression vector conditionally to the observations and to the model
$m$.

\subsection{Logistic regression}
Let $G$ be a known $N\times \dimstate$ design matrix.  We have $N$ independent observations $Y =
(Y_1,\ldots,Y_N)$ such that for all $i$, $Y_i$ is
a Bernoulli random variable with parameter
$\exp(G_{i,\cdot}X)/(1+\exp(G_{i,\cdot}X))$.  Conditionally to a model $m \in
\M$, the prior on the nonzero components of the regression vector $X \in S_m$
is $\gauss(0,c(G'_{m}G_{m})^{-1})$, where $c$ is a known scaling parameter, and
$G_{m}$ denotes the matrix with columns $\{G_{\cdot,i}, i \in I_m \}$.  The
prior on the models $\omega_m$ is equal to $\theta_\star^{|m|}
(1-\theta_\star)^{p-|m|}$ for $\theta_\star \in (0,1)$. In this experiment, we
choose $\dimstate = 50$ and $N = 100$ to assess the performance of STMALA in a
simple framework; the components of $G$ are i.i.d. $\gauss(0,1)$ and
$\theta_{\star} = 0.05$. The algorithm is run with $c = 100$, $\sigma = 0.3$
and $\eta = 5$. The choice of the threshold $\gamma$ in $\sto_2$ is crucial (if
$\gamma$ is too large, few nonzero samples are proposed and the algorithm
converges slowly and if $\gamma$ is too small, the algorithm proposes
non-sparse solutions that are not likely to be accepted): $\gamma$ is set to
$0.4$ to get a mean acceptance rate of around $20\%$.

STMALA is used to estimate the posterior probabilities of activation of the
components of $\state$, defined for all $1 \leq i \leq \dimstate$ as the
conditional probability of the event $\{X_i\ne 0\}$ given the observations $Y$.
The estimation is given by $\sum_{n=B}^{N_{it}+B} \indi{\state^n_i \neq
  0}/N_{it}$ where $N_{it}$ is the number of iterations of the algorithm and
$B$ denotes the number of iterations discarded as a burn-in period. We choose
$N_{it} = 50.000$ and $B = 10.000$.  Figure~\ref{fig:logistic_x_acc} (top)
provides the true regression vector, the posterior mean of the regression
vector given by STMALA and the estimated activation probabilities over $100$
independent Monte Carlo runs. This experiment highlights the ability of STMALA
to choose the good model (the $3$ nonzero components of $X$ are recovered) and
to get high posterior probabilities of activation for the selected components
of $X$.

\begin{figure}[ht!]
\begin{center}
  \includegraphics[width=.9\linewidth]{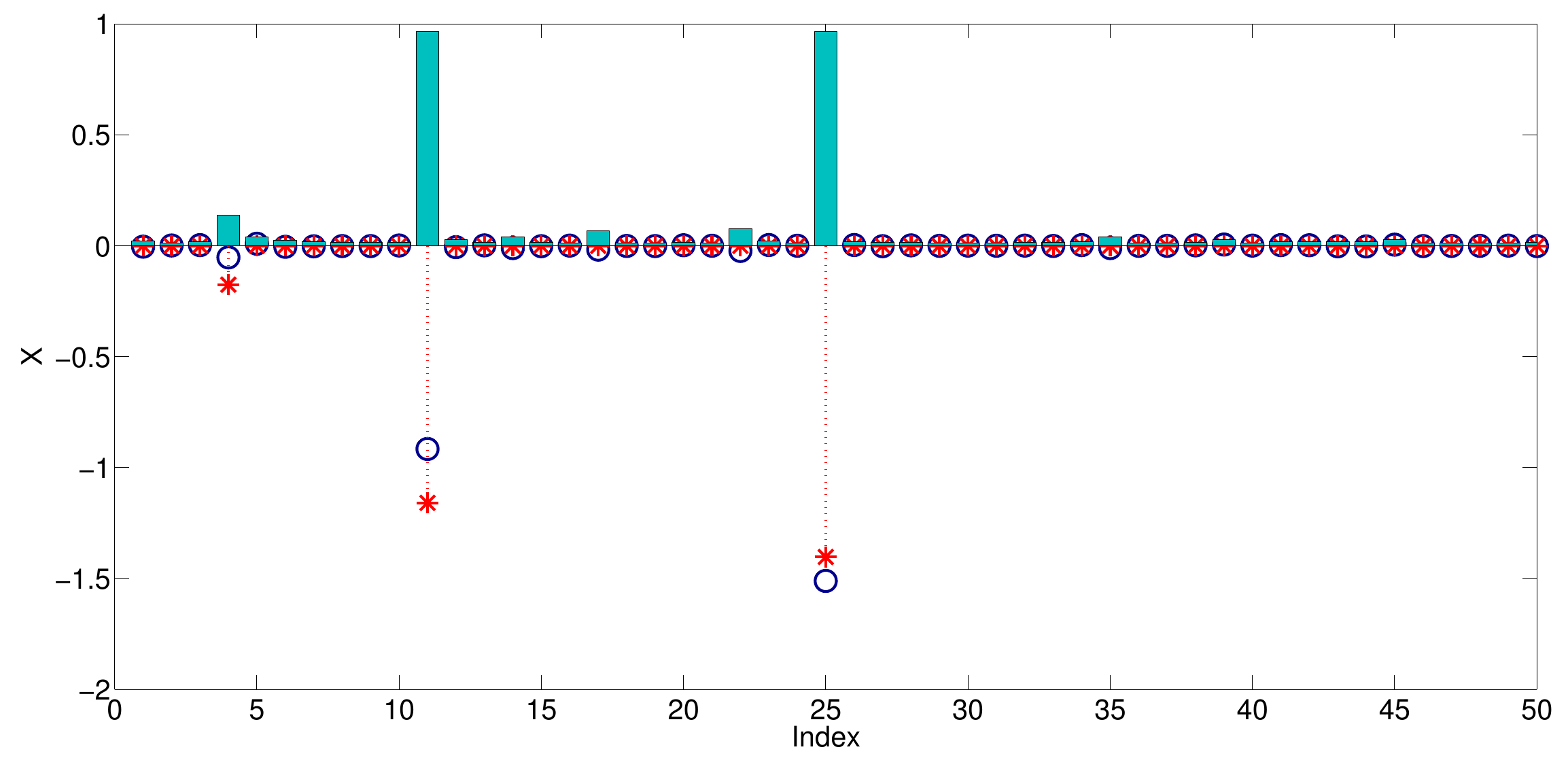}
  \includegraphics[width=.9\linewidth]{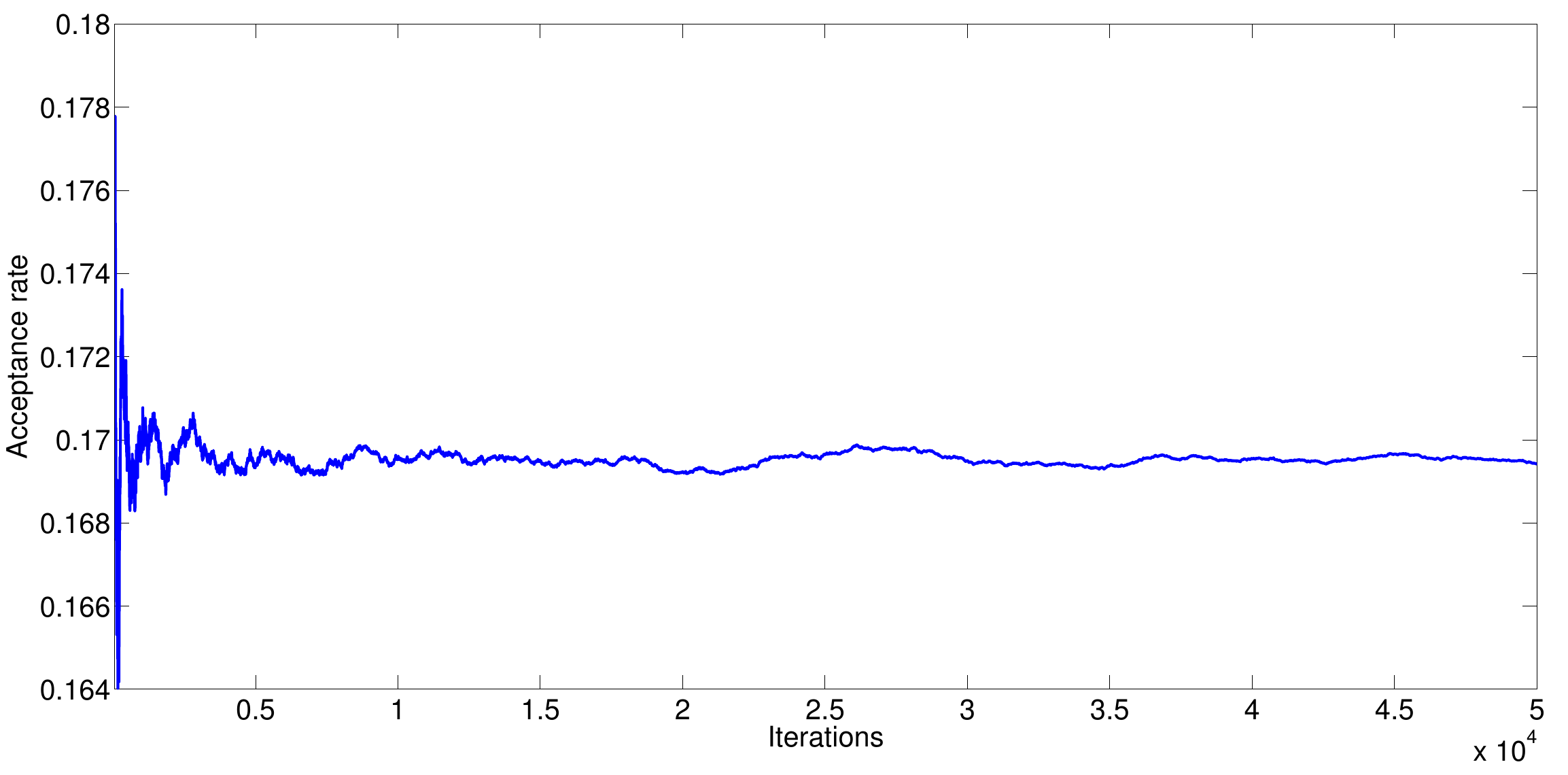}
\end{center}
\caption{(top) True regression vector (stars), mean regression vector (circles) and estimated activation probabilities (bars). (bottom) Mean acceptance rate as a function of the number of iterations.} \label{fig:logistic_x_acc}
\end{figure}

\subsection{Linear regression}
The model for the observations $\Obs \in \Rset^N$ is assumed to be
\[
\Obs = \design \state + \tau^{-1/2}  \noise \eqsp,
\]
where $\design$ is a $N \times \dimstate$ (known) design matrix, $\noise$ is a
Gaussian random vector with i.i.d. standard entries and $\tau$ is the (known)
precision.  The prior on the models is $\omega_m = \theta_\star^{|m|}
(1-\theta_\star)^{p-|m|}$ for some (known) $\theta_\star \in (0,1)$. The
conditional distribution of $X$ given the observations $Y$ and the model $m$ is
given by
\begin{equation*}
  \pi_m\left(x\right) \propto \exp\left( - \frac{\tau}{2 }
    \|Y - \design x \|^2\right)  \times \prod_{\ell=1}^\dimstate \left\{ \left( 1 + \frac{x_\ell^2}{2aK}
    \right)^{ -( a+1/2)} \hspace{-1cm}\indi{m_\ell =1} + \delta_0(x_l) \ 
    \indi{m_\ell = 0} \right\}\eqsp.
\end{equation*}
Such a posterior distribution can be obtained from the following hierarchical
model: \textit{(i)} given $m \in \M$ and positive precisions
$(\vartheta_1,\ldots,\vartheta_{\dimstate})$, the entries $\state =
(\state_1,\ldots,\state_{\dimstate})$ are independent with distribution
\[
\state_k \vert m, \vartheta_1, \ldots, \vartheta_\dimstate \sim   \left\{
  \begin{array}{ll}
\delta_0  & \!\! \text{if $m_k = 0$,} \\
\mathcal{N}(0, 1/\vartheta_k) & \!\! \text{if $m_k =1$.}
  \end{array} \right.
\]
\textit{(ii)} the precision parameters
$\vartheta=(\vartheta_1,\ldots,\vartheta_{\dimstate})$ are i.i.d. with Gamma
distribution $\mathrm{Ga}\left( a, aK\right)$, where $a,K$ are fixed. 

The performance of STMALA is illustrated with the model introduced in
\cite{breiman:1992} and presented in \cite[Section~$8$]{ishwaran:rao:2005}.  We
choose $ N =100$ and $\dimstate=200$. The covariates
$(\design_{\cdot,1},\ldots,\design_{\cdot,\dimstate})$ are sampled from a
Gaussian distribution with $\mathbb{E}[\design_{\cdot,i}] = 0$ and
$\mathbb{E}[\design_{ji}\design_{ki}] = (0.3)^{|j-k|}$; $\tau = 1$. To produce
the observations, we choose the nonzero coefficients of $\state$ in $4$
clusters of $5$ adjacent variables such that, for all $k\in\{1,2,3,4\}$ and all
$j\in\{1,2,3,4,5\}$, $ \state_{50*(k-1)+j} = (-1)^{k+1}\,j^{1/k}$.  Below, this {\em true} value of the regression vector is denoted by $X^\star$. \\
$\theta_\star = 0.1$, $a=2$ and $K = 0.08$. STMALA is run with $\eta = 20$ and
$\gamma=0.35$.

The standard deviation of the RJMCMC proposal is chosen so that STMALA and
RJMCMC have similar acceptance rates (between $15\%$ and $20\%$). 
\begin{figure}[ht!]
\begin{center}
  \includegraphics[width=.75\linewidth]{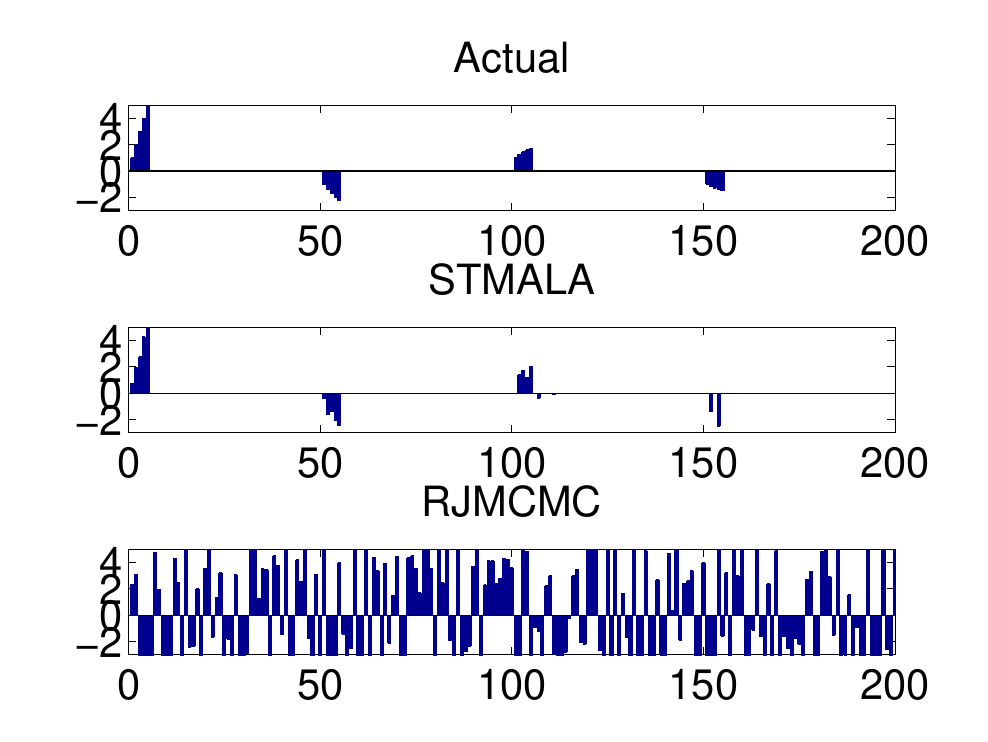}
\end{center}
    \caption{Regression vectors estimated by STMALA and RJMCMC.}
     \label{fig:bre:acc:acf}
\end{figure}
Figure~\ref{fig:bre:acc:acf} shows the true regression vector $\state$ and its
estimates obtained by STMALA and RJMCMC; these estimates $\hat{X}$ are defined
as the posterior mean along a trajectory of length $10^6$ (the first $10 \%$
samples are discarded).  It shows that STMALA provides a sparse estimation
while RJMCMC needs a lot of components to explain the observations. This is
probably because RJMCMC is more or less equivalent to test each model in turn,
which yields slow convergence in high dimensional settings. This slow
convergence is also illustrated in Figure~\ref{fig:bre:nact:bpx}.  $50$
independent trajectories of length $10^6$ are run;
Figure~\ref{fig:bre:nact:bpx} (top) shows the evolution of the mean number
(over the $50$ runs) of active components $|m|$. RJMCMC has not converged after
the $300.000$ iterations while the mean number of active components of STMALA
is stable after few iterations.  Figure~\ref{fig:bre:nact:bpx} (bottom)
displays the boxplots of the estimation of the first component $\state_1$
estimated by STMALA and RJMCMC as a function of the number of iterations.

\begin{figure}[ht!]
\begin{center}
  \includegraphics[width=.7\linewidth]{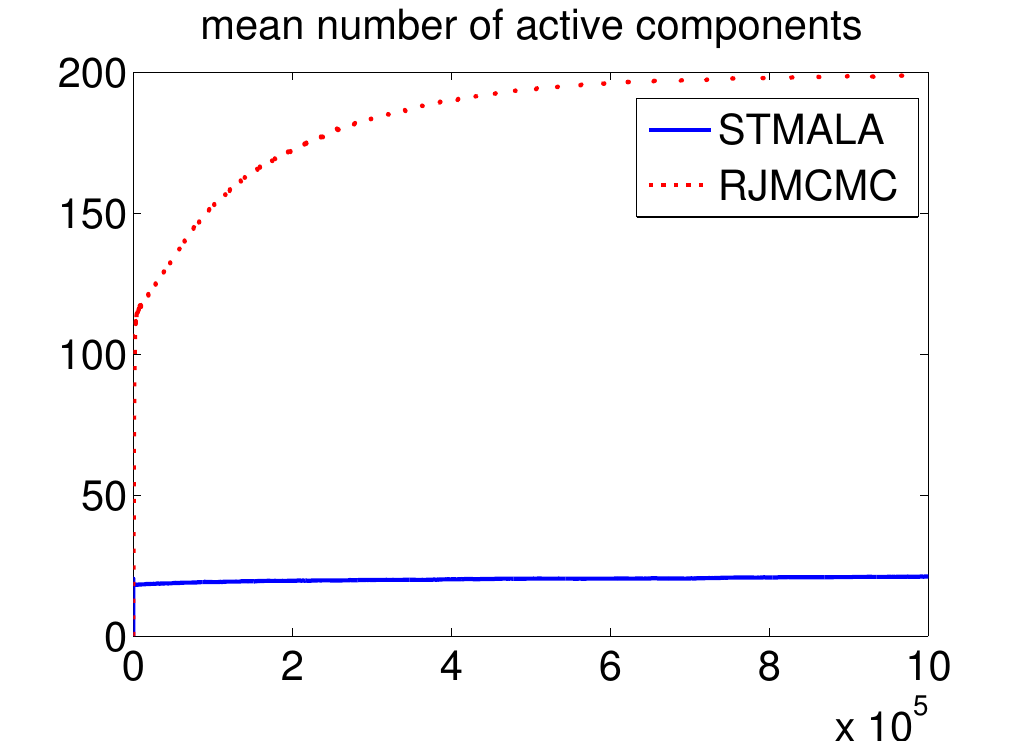}
  \includegraphics[width=.7\linewidth]{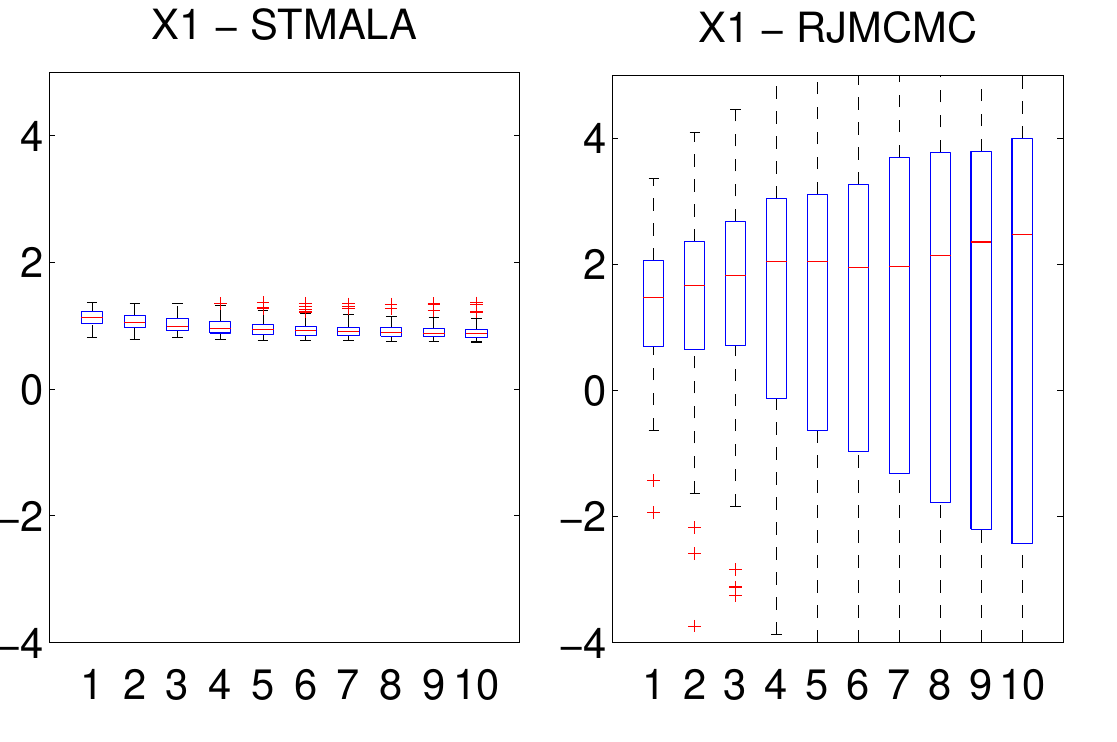}
\end{center}
    \caption{(top) Evolution of the mean number of active components for STMALA and RJMCMC. (bottom) Evolution of the estimation of $\state_1^\star$ (mean over iterations) for STMALA and RJMCMC.}
     \label{fig:bre:nact:bpx}
\end{figure}

Figure~\ref{fig:bre:rvsp:et} (top) shows the signal $\design \hat{\state}$
estimated by STMALA and RJMCMC as a function of the actual emitted signal
$\design\state$ (blue circles), where $\hat{\state}$ is the mean regression
vector over a trajectory. To highlight over fitting effects, a test sample
$Y_{\rm test} = \design_{\rm test} \state^\star + \tau^{-1/2} \noise_{\rm
  test}$, where $\design_{\rm test} \in \Rset^{100 \times 200}$ and
$\noise_{\rm test} \in \Rset^{100}$ are generated exactly as $\design$ and
$\noise$, is also used.  With green circles, $\design_{\rm test} \hat{\state}$
as a function of $\design_{\rm test} \state^\star$ are displayed.  This test
data set is also used to compute a test error, which is given by
\[
\mathcal{E}_{\rm test} \eqdef \frac{\|\design_{\rm test}
  \hat{\state}-\design_{\rm test} \state^\star\|^2}{100}\eqsp.
\]
The evolution of the mean test error $\mathcal{E}_{\rm test}$ over 100
independent runs, is displayed in Figure~\ref{fig:bre:rvsp:et} (bottom).  Both
figures show that RJMCMC is subject to some over fitting, which is not the case
of STMALA.

\begin{figure}[ht!]
\begin{center}
  \includegraphics[width=.65\linewidth]{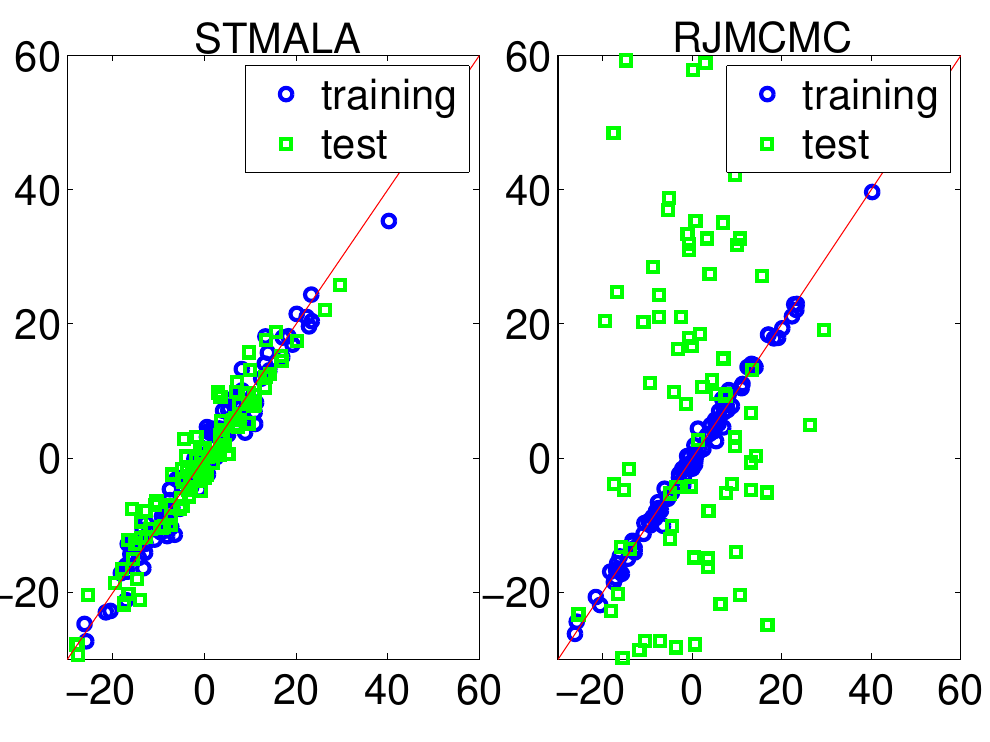}
   \includegraphics[width=.65\linewidth]{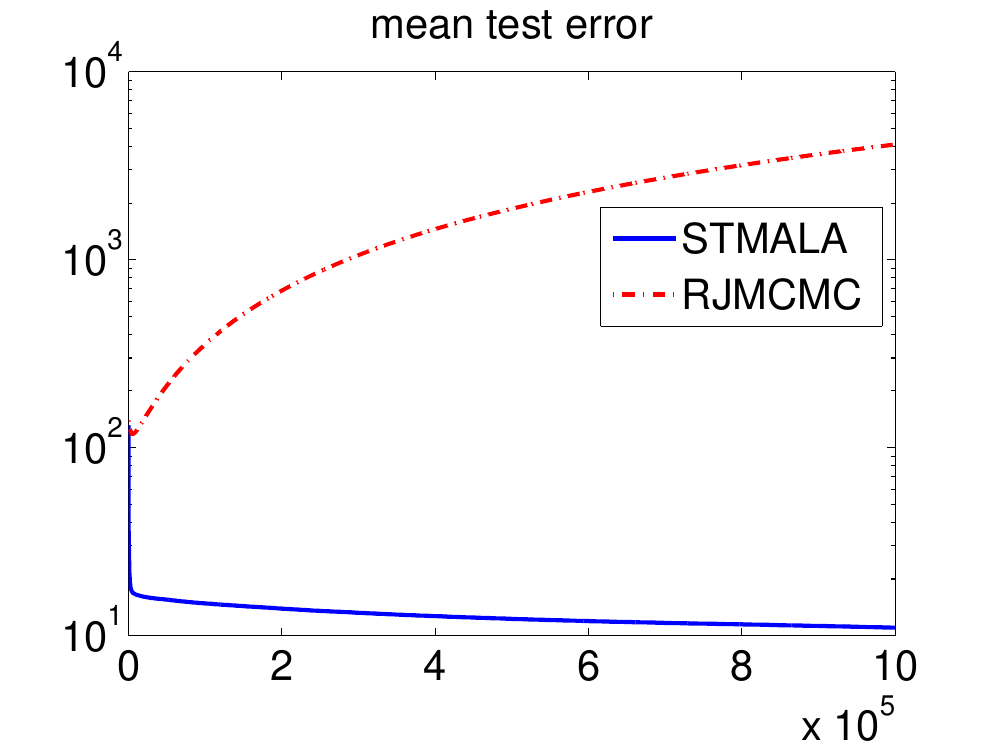}
\end{center}
    \caption{(top) Emitted signal $\design \hat{\state}$ estimated by STMALA and RJMCMC versus actual emitted signal $\design\state$. (bottom) Evolution of the mean test error for RJMCMC and STMALA.}
     \label{fig:bre:rvsp:et}
\end{figure}

\subsection{Regression for spectroscopy data} \label{sec:exp:realdata}
We use the biscuits data set composed of near infrared absorbance spectra of 70
cookies with different water, fat, flour and sugar contents studied in
\cite{brown:fearn:vannucci:2001} and \cite{caron:doucet:2008}. The data are
divided into a training data set containing measurements for $N=39$ cookies, and a test
data set containing measurements for $31$ cookies. The observation model is given by
\[
Y = GX + \tau^{-1/2}E\eqsp,
\]
where $G$ is the design matrix, $X$ is the unknown regression vector and
$E\sim\gauss(0,I)$ is the measurement noise.  Each row of the design matrix
consists of absorbance measurements for $\dimstate=300$ different wavelengths
from $1202$ nm to $2400$ nm with gaps of $4$ nm. We compare the results
obtained by STMALA with those obtained by RJMCMC for the prediction of fat
content. To improve the stability of the algorithm, the columns of the matrix
$G$ containing the measurements are centered and a column with each entry being
equal to one is added.

The parameters of the algorithms are given by $\cnoise = 0.5$, $\eta=15$,
$\gamma = 0.35$ for STMALA. The computations are made over $100$ independent
trajectories of $N_{it}=2.10^6$ iterations, with a burn-in $B=10^5$.  The
design parameters of STMALA and RJMCMC are chosen so that the two algorithms
have similar acceptance-rejection ratios (the final ratios are about $45 \%$
for STMALA and $42 \%$ for RJMCMC).  Figure~\ref{fig:bis:est} shows the
regression vectors $\hat{X}$ obtained by STMALA and RJMCMC, and computed as the
posterior mean along one trajectory (left) and the mean regression vector
estimated by STMALA and RJMCMC over $100$ independent trajectories (right).

The regression vector estimated by STMALA has a spike around $1726$ nm, which
is known to be in a fat absorbance region
(see~\cite{brown:fearn:vannucci:2001,caron:doucet:2008}), in almost all the
trajectories.
\begin{figure}[ht!]
\begin{center}
  \includegraphics[width=.49\linewidth]{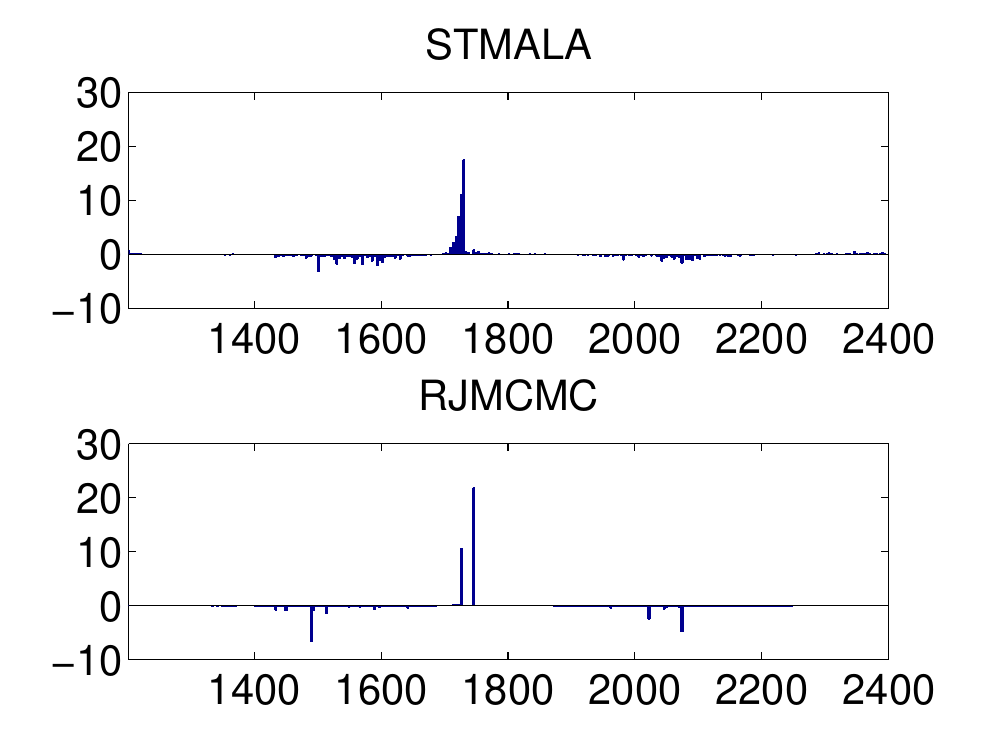}
   \includegraphics[width=.49\linewidth]{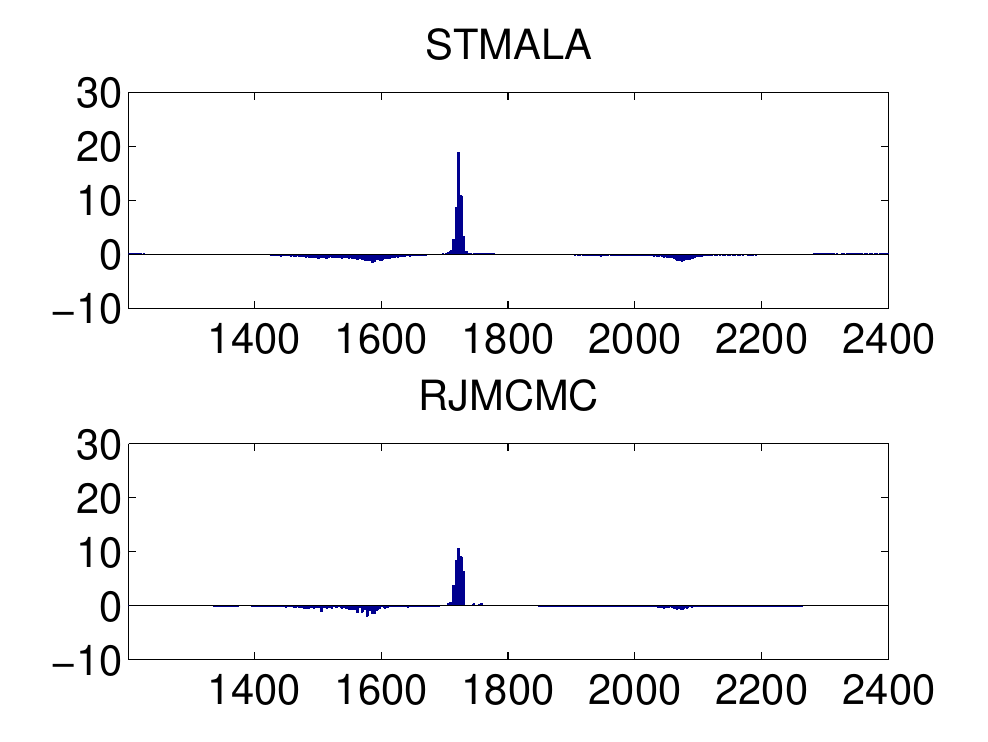}
\end{center}
    \caption{(left) Regression vectors estimated by STMALA and RJMCMC. (right) Mean regression vectors estimated STMALA and RJMCMC over $100$ independent trajectories.}
     \label{fig:bis:est}
\end{figure}

Figure~\ref{fig:bis:bpest} displays the boxplots of the $100$ independent
values of the components of the regression vectors estimated by STMALA and
RJMCMC associated to $9$ wavelengths close to $1726$ nm. It illustrates that
the location of the spike retrieved by RJMCMC is not stable, while STMALA
retrieves a spike centered at $1726$ nm in almost every trajectory.

\begin{figure}[ht!]
\begin{center}
  \includegraphics[height=4.5cm,width=.95\linewidth]{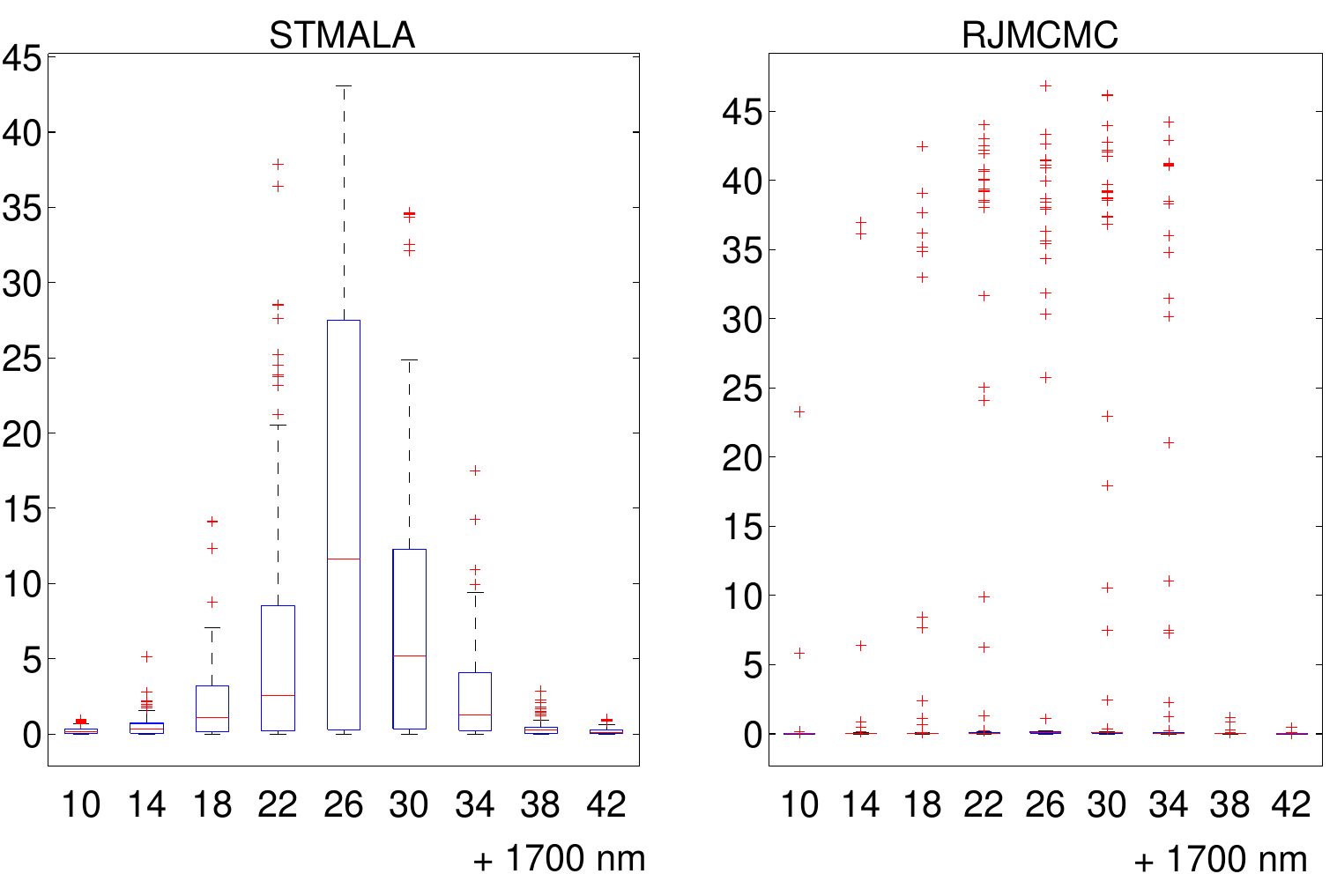}
\end{center}
    \caption{Boxplots of the $100$ independent values of the components of the regression vectors estimated by STMALA and RJMCMC associated with $9$ wavelengths close to $1726$ nm.}
     \label{fig:bis:bpest}
\end{figure}

Figure~\ref{fig:bis:test} (top) shows the estimated emitted signal $\design
\hat{\state}$ obtained by STMALA and RJMCMC as a function of the observations
$\Obs$. In this numerical experiment, STMALA provides better results than
RJMCMC for both the training set and the test set.  This is confirmed by
Figure~\ref{fig:bis:test} (bottom) which displays the evolution of the mean
square error (MSE) on the test dataset, defined by
\begin{align*}
 \textrm{MSE} = \frac{\|G_{\textrm{test}} \hat{X} - Y_{\textrm{test}} \|^2}{31} \eqsp,
\end{align*}
as a function of the number of iterations (mean over 100 independent
trajectories). The mean MSE after $2.10^6$ iterations is about $0.75$ for
STMALA and about $1.6$ times greater for RJMCMC.

\begin{figure}[ht!]
\begin{center}
  \includegraphics[width=.65\linewidth]{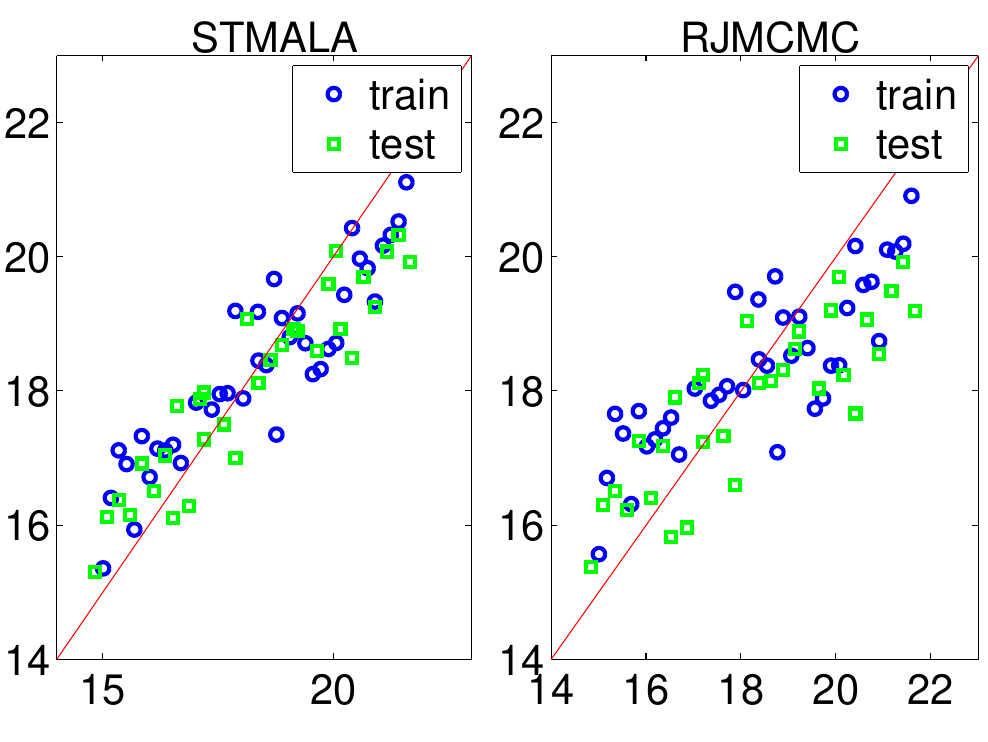}
   \includegraphics[width=.65\linewidth]{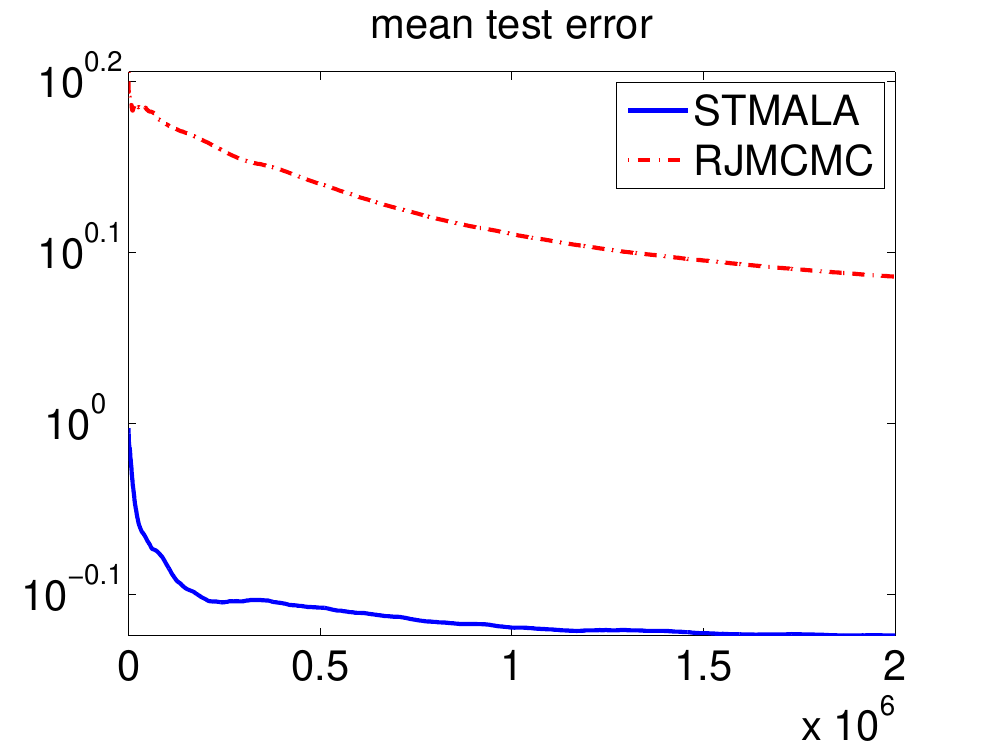}
\end{center}
    \caption{(top) Emitted signal $\design \hat{\state}$ estimated by STMALA and RJMCMC versus the  observations $\Obs$.  (bottom) Evolution of the mean MSE (over $100$ independent trajectories) on the test data set for RJMCMC and STMALA.}
     \label{fig:bis:test}
\end{figure}

\section{Conclusions}
In this paper, we propose a new trans-dimensional MCMC algorithm to
perform Bayesian variable selection in a high-dimensional regression setting.
This algorithm is closely related to \cite{pereyra:2015} but is adapted to
sample models which are exactly sparse in the sense that a certain number of
components are equal to zero. In addition, under fairly weak assumptions, the
STMALA algorithm is shown to be geometrically ergodic.  In the high-dimensional
setting, the STMALA algorithm outperforms the RJMCMC algorithm which is
considered as the state of the art. The performance of the STMALA algorithm
depends on the tuning of a set of parameters: an adaptive version is currently
under investigation. Also, the algorithm has still to be adapted to the ultra
large scale framework, which likely requires additional specific procedures.

\section{Proofs}
\label{PMALA:sec:proofs}
For all $m\in\mathcal{M}$, define $\idxmodel{m}= (\idxmodel{m}_1, \cdots,
\idxmodel{m}_\dimstate)$ as the indices of nonzero coefficients of $m$: $\idxmodel{m}_1 \eqdef \inf\{1\le i\le
\dimstate\;;\;m_i=1\}$ and for $2\le j\le |m|$, $\idxmodel{m}_j \eqdef \inf\{i
> \idxmodel{m}_{j-1}: m_i=1\}$.  Then, for all $x\in\Rset^{|m|}$, let
$\expvec{x}{m}$ be the vector of $\Rset^{\dimstate}$ such that for all $1\le
i\le |m|$, $\expvec{x}{m}_{\idxmodel{m}_i} = x_i$ and for all $i\notin
\{\idxmodel{m}_1,\ldots,\idxmodel{m}_{|m|}\}$, $\expvec{x}{m}_i=0$. For all
$y\in\Rset^p$ and all $m\in\mathcal{M}$, let $\shrinkvec{y}{m}$ be the vector
of $\Rset^{|m|}$ such that for all $1\le i\le |m|$, $(\shrinkvec{y}{m})_i =
y_{\idxmodel{m}_i}$.

\subsection{Proof of Lemma~\ref{STMALA:prox}}
\label{proof:lem:STMAL:prox}
Consider first the case $\dimstate=1$.
We first compute the derivative of $\hstos$ on $\ooint{0, \infty}$ (note that $\hstos$ is symmetric). For any $x \in \ooint{0,\infty}$,
 \begin{equation*}
  \hstos'(x) = \gamma^2 \left[(x^2+4\gamma^2)^{-1/2} +  (x^2+4\gamma^2)^{-1/2} \exp \left(-2 \asinh \left(x/(2 \gamma) \right) \right) \right] \eqsp.
 \end{equation*}
 Using straightforward computations, we get
 \begin{align*}
  \hstos'(x) = (-x + \mathrm{sign}(x) \sqrt{x^2+4 \gamma^2})/2  \eqsp.
 \end{align*}
Set $\psi_u(x) \eqdef \hstos(x) +  \ (x-u)^2/2$.  Since we have $ \psi_{-u}(x) = \psi_u(-x)$, we only have to consider the case when $u \geq 0$.  Hereafter, $u \geq 0$.  It is easily proved that on
$\ooint{0, \infty}$, the derivative $\psi_u'$ is strictly increasing to infinity, and
a solution to the equation $\psi_u'(x) = 0$ exists on $\ooint{0, \infty}$ \iff\ $u >
\gamma$. In this case, this solution is $u-\gamma^2/u$, and
$\psi_u(u-\gamma^2/u) \leq \psi_u(0)$.  When $u \in \coint{0, \gamma}$, $\inf_{x>0}
\psi_u(x) = \psi_u(0)$.  Moreover, it can be proved that $\psi_u'(x)=0$ has no
solution on $\ooint{- \infty, 0}$, and therefore that $\inf_{x<0} \psi_u(x) =
\psi_u(0)$ whatever $u>0$ is. Hence, the minimum is reached at $0$ if $u \in
\coint{0, \gamma}$ and at $u-\gamma^2/u$ if $u >\gamma$.

Consider now the case $\dimstate >1$. Set $x \in \Rset^\dimstate$ of the form $x = r
\xi$ where $r>0$ and $\xi$ is on the unit sphere of $\Rset^\dimstate$. Since the
function $\hstos$ only depends on the radius $r$, the minimum over $\Rset^\dimstate$
of $x \mapsto \hstos(x) + \| x- u \|^2/2$ is reached in the direction
$\xi_\star = u/\|u\|$. Then, finding the minimum in this direction is
equivalent to find the minimum of the function $\psi_u$ on $\Rset^+$, which
yields $r_\star =0$ if $ \|u\| \leq \gamma$ and $r_\star =
(1-\gamma^2/\|u\|^2)$ otherwise. This concludes the proof.

\subsection{Proof of Lemma~\ref{PMALA:lem:propnorm21}}
\label{sec:proofs:pmala:etc}
Let $\varphi$ be a bounded continuous function on $\Rset^{\dimstate}$. Then,
\begin{equation*}
  \PE[\varphi(Z)]  = \left( 2 \pi \sigma^2 \right)^{-\dimstate/2}
  \int_{\Rset^{\dimstate}} \varphi \left(\stog(y) \right) \times \prod_{i=1}^\dimstate\exp
  \left(-\frac{|y_{i}- \mu_{i}(x)|^2}{2 \sigma^2} \right) \rmd
  y\eqsp.
 \end{equation*}
 For $m\in\M$ and $y \in \Rset^{|m|}$, set $\overline{y}=(\overline{y}_1,
 \cdots, \overline{y}_{|m|})$ where $\overline{y}_{i} \eqdef y_{i} \left( 1 -
   \gamma/| y_{i}| \right)$.
Fubini's theorem yields
\begin{multline*}
  \PE[\varphi(Z)] 
= \left( 2 \pi \sigma^2 \right)^{-|m|/2} \sum_{m \in \M} \prod_{i \notin I_m}
  \rho\left(\mu_{i}(x) \right) \times \int_{\Rset^{|m|}} \varphi
  \left(\expvec{\overline{y}}{m}\right) \ \left( \prod_{k=1}^{|m|} \indi{|
      y_{k}| > \gamma} \right) \\
\times \exp \left(-\frac{\|y-\shrinkvec{\mu(x)}{m}\|^2}{2 \sigma^2} \right) \rmd
  y\eqsp.
\end{multline*}
It is sufficient to compute integrals of the form
\begin{equation*}
I(\widetilde{\varphi})= \int_{ \Rset} \widetilde{\varphi} \left( v \left( 1 - \frac{\gamma}{|v|} \right) \right) \, \indi{|v| > \gamma} \times \exp \left(-\frac{|v-\mu|^2}{2 \sigma^2} \right)   \rmd v \eqsp,
\end{equation*}
for a generic function $\widetilde{\varphi}$. Consider the change of variable
$\Rset\setminus [-\gamma,\gamma] \to \Rset^{\star}$: $z = v \left( 1-
  \frac{\gamma}{|v|}\right)$. Note that $|z| = |v|-\gamma$ and $v = \psi(z)$,
where for any $z \in \Rset^{\star}$, $\psi(z) \eqdef (1+ \gamma/|z|)z$.  Then,
\[
I(\widetilde{\varphi})= \int_{\Rset^{\star}}   \widetilde{\varphi} \left(v \right) \, \exp\left(-\frac{|\psi(v)-\mu|^2}{2 \sigma^2} \right)\rmd v\eqsp.
\]
This concludes the proof for $\stog$. The proof for $\stos$ follows the same lines as the proof of
Lemma~\ref{PMALA:lem:propnorm21}, with the function $\psi$ replaced by $
\widetilde{\psi}(z) = g\left(\gamma^2/\|z\|^2 \right) \ z$.

\subsection{Proof of Theorem~\ref{th:erggeo}}
\label{sec:proofs:erg}
For ease of notations, we denote by $q$ the proposal distribution.
Lemma~\ref{PMALA:lem:propnorm21} shows that for any $m \in \M$ and $y \in S_m$
\begin{equation}
  \label{eq:local:q}
q(x,y)=   \prod_{i \notin I_m} \rho\left( \mu_i(x)
\right)  \ \prod_{i \in I_m}  f\left( \mu_i(x), y_i\right) \eqsp,
\end{equation}
where $\rho$ and $f$ are given by Lemma~\ref{PMALA:lem:propnorm21} and $\mu(x)
= (\mu_1(x), \cdots, \mu_\dimstate(x))$ is given by \eqref{eq:local:mu}.   We
start with a preliminary lemma which will be fundamental for the proofs since
it allows to compare the proposal distribution $q$ to Gaussian proposals. Denote by $g_\sigma$ the one-dimensional centered Gaussian density with standard deviation $\sigma$.
\begin{lemma} \label{lemme:bgaus}
There exist $\cigaus$, $\cgaus$, $\vigaus$ and $\vgaus$ such that For any $x,y \in \Rset^\dimstate$ and any $1 \leq i \leq \dimstate$,
 \begin{equation*}
  \cigaus \ g_{\vigaus}(y_i - x_i) \leq f(\mu_i(x), y_i) \leq \cgaus \ g_{\vgaus}(y_i - x_i) \eqsp,
 \end{equation*}
\end{lemma}

\begin{proof}
  Assume first that $\sto = \stog$. Let $x,y \in \Rset^\dimstate$ and $ i \in \{1, \cdots, \dimstate\}$. By
  definition of $\mu$ (see \eqref{eq:local:mu}), we have $ \left| \mu_i(x)- x_i
  \right| \leq \| \mu(x) - x\| \leq D \sigma^2/2$.  Thus,
\[
\left| y_i - x_i \right|
\leq \left| y_i + \gamma \sign(y_i) - \mu_i(x) \right| + \gamma +
  \frac{D \sigma^2}{2} \eqsp,
\]
which implies $\left| y_i + \gamma \sign(y_i) - \mu_i(x) \right|^2 \geq
\frac{1}{2} \left| y_i - x_i \right|^2 - \left(\gamma + D \sigma^2/2\right)^2$.
Similarly, $ \left| y_i + \gamma \sign(y_i) - \mu_i(x) \right|^2 \leq 2 \left|
  y_i-x_i \right|^2 + 2 \left(\gamma + D \sigma^2/2\right)^2$.  
  
Assume now that $\sto = \stos$ and let $x,y \in \Rset^\dimstate$ and $ i \in \{1, \cdots, \dimstate\}$. First,
\begin{equation*}
g(\gamma^2|y|^{-2})\tilde{g}(\gamma^2|y|^{-2})  = (1+1/\sqrt{1+4\gamma^2|y|^{-2}})/2\eqsp,
\end{equation*}
which yields $1/2\le g(\gamma^2|y|^{-2})\tilde{g}(\gamma^2|y|^{-2})\le 1$.
Furthermore,
\begin{align*}
|g(\gamma^2|y|^{-2}) y-\mu(x)| &\le |g(\gamma^2 |y|^{-2}) y -y|+|y-x|+|x-\mu(x)|\le \gamma+|y-x|+D \sigma^2/2\eqsp,
\end{align*}
On the other hand,
\begin{align*}
|y-x| \le |g(\gamma^2|y|^{-2})y-\mu(x)| + |g(\gamma^2|y|^{-2})y-y|+|x-\mu(x)| \le |g(\gamma^2|y|^{-2})y-\mu(x)| + \gamma + D \sigma^2/2\eqsp.
\end{align*}
\end{proof}
\begin{corollary}
\label{lemme:bgaus:maj:q} For any $x \in \Rset^\dimstate$ and $y \in S_m$,
  $q(x,y) \leq \cgaus^{|m|} \prod_{i \in I_{m}} g_{\vgaus}(y_i-x_i)$.
  Therefore, there exists a constant $C>0$ such that for any $x,y \in \Rset^\dimstate$,
  $q(x,y) \leq C$.
\end{corollary}
The proof of Theorem~\ref{th:erggeo} also requires a lower bound on the probability that a component of the proposed point will be set to zero. Such a bound is given in Lemma~\ref{lem:minoration:probap}.
\begin{lemma}
\label{lem:minoration:probap} Let $\rho$ and $\mu$  be given by Lemma~\ref{PMALA:lem:propnorm21} and  \eqref{eq:local:mu}. It holds
\[
\inf_{m \in \M} \inf_{z \in S_m} \inf_{i \not \in I_m} \rho(\mu_i(z)) > 0 \eqsp.
\]
\end{lemma}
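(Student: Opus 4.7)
}
The plan is to show that for $i\notin I_{m_z}$ the argument $\tilde\mu_i(z)$ lies in a bounded set, and then use the strict positivity and continuity of $c\mapsto p(c)$ on any compact set. First, I would recall that by definition of the partition $(S_m)_{m\in\M}$, if $z\in S_{m_z}$ and $i\notin I_{m_z}$, then $z_i=0$. Second, by the truncation in~(\ref{eq:local:mu}), we have
\[
\tilde\mu_i(z)-z_i = -\frac{\sigma^2}{2}\,\frac{D\,(\nabla g(z))_i}{\max(D,\|\nabla g(z)\|_2)}\eqsp,
\]
whose absolute value is at most $D\sigma^2/2$ since $D\|\nabla g(z)\|_2/\max(D,\|\nabla g(z)\|_2)\leq D$. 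Hence for every $z\in\Rset^P$ and every $i\notin I_{m_z}$,
\[
\tilde\mu_i(z)\in\bigl[-D\sigma^2/2,\,D\sigma^2/2\bigr]\eqsp.
\]

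Next, since $T=1$, $p(c)=\PP\{|c+\xi|\leq\gamma\}$ for $\xi\sim\mathcal{N}(0,\sigma^2)$, which can be written as $\Phi\!\left((\gamma-c)/\sigma\right)-\Phi\!\left((-\gamma-c)/\sigma\right)$. This is a continuous function of $c$ and strictly positive everywhere on $\Rset$ because the interval $[-\gamma-c,\gamma-c]$ has positive Lebesgue measure (as $\gamma>0$) and the Gaussian density is strictly positive on $\Rset$. Restricting to the compact set $K\eqdef[-D\sigma^2/2,D\sigma^2/2]$ and applying the extreme value theorem,
\[
\inf_{c\in K} p(c) = \min_{c\in K} p(c) > 0\eqsp.
\]

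Combining the two steps, for every $z\in\Rset^P$ and every $i\notin I_{m_z}$, $p(\tilde\mu_i(z))\geq \min_{c\in K} p(c)>0$, which gives the desired uniform lower bound. There is no real obstacle here: the only subtlety is to notice that the gradient truncation built into~(\ref{eq:STMALA:truncated}) is precisely what confines $\tilde\mu_i(z)$ to a compact set independent of $z$; without that truncation (as in the raw proposal (\ref{eq:proposal:algo})) $\tilde\mu_i(z)$ could be unbounded and $p(\tilde\mu_i(z))$ could vanish, which is the practical reason the truncation was introduced for the ergodicity analysis.
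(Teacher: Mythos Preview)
Your proof is correct and follows exactly the same approach as the paper: bound $|\tilde\mu_i(z)|\leq D\sigma^2/2$ for $i\notin I_{m_z}$ using the gradient truncation and $z_i=0$, then invoke positivity of $p$ on the resulting compact interval. The paper's version is terser (it omits the explicit continuity/extreme-value argument and simply asserts the existence of the constant $C$), but the substance is identical.
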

\begin{proof}
For $i \not \in I_m$,  by \eqref{eq:local:mu}, $| \mu_i(z) | \leq D
  \sigma^2 /2$. Hence, there exists a constant
  $C>0$ such that
\begin{align} \label{eq:min:p21}
  \inf_{z \in \Rset^\dimstate} \min_{i \not \in I_m} \PP(|\mu_i(z) + \sigma \xi| \leq
  \gamma) \geq C \eqsp,
\end{align}
where $\xi \sim \mathcal{N}(0,1)$.
\end{proof}

\begin{proposition}\label{prop:small}
  \begin{enumerate}[(i)]
  \item \label{prop:small:set} Let $C$ be a Borel set of $\Rset^\dimstate$ such
    that for any $m \in \M$, $C \cap S_m$ is a compact set of $S_m$. Then, $C$
    is a one-small set for the kernel $\trunckernel_\sto$: there exists a positive
    measure $\tilde\nu$ on $\Rset^\dimstate$ such that $\trunckernel_\sto(x,A) \geq
    \tilde \nu(A) \un_{C}(x)$.
  \item \label{prop:small:irreducibility} The Markov kernel $\trunckernel_\sto$ is
    psi-irreducible and aperiodic.
  \end{enumerate}
\end{proposition}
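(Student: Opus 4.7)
The plan for \textit{(i)} is to exhibit a concrete minorizing measure supported on a bounded piece of one stratum. Fix $m_0\in\M$ with $\omega_{m_0}>0$ and a bounded open set $B_0\subset S_{m_0}$ with $\nu(B_0)>0$ on which continuity of $\pi_{m_0}$ (Assumption~A\ref{hyp:reg:pi}) yields $\pi_{m_0}\geq\pi_{\min}>0$. Because $\M$ is finite, $C=\bigcup_{m\in\M}(C\cap S_m)$ is a finite union of compacts and hence bounded, and the truncation in~\eqref{eq:STMALA:truncated} gives $\|\tilde\mu(x)-x\|_2\leq D\sigma^2/2$; so $\tilde\mu(x)$ stays in a bounded set $K$ as $x$ ranges over $C$.

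The first key step is to bound $q(x,y)$ and $q(y,x)$ from below, uniformly over $(x,y)\in C\times B_0$. For $i\in I_{m_0}$, Lemma~\ref{lemme:bgaus}(i) gives $f(\tilde\mu_i(x),y_i)\geq\cigaus\,g_{\vigaus}(y_i-x_i)$, and this Gaussian factor is bounded below on the compact $(C\cap S_m)\times B_0$; for $i\notin I_{m_0}$, continuity and strict positivity of $p$ together with $\tilde\mu_i(x)\in K$ force $p(\tilde\mu_i(x))\geq c_p>0$. This yields $q(x,y)\geq c_q>0$ on $C\times B_0$, and a symmetric argument (swapping roles of $x$ and $y$) gives $q(y,x)\geq c_q'>0$. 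Combining this with the global upper bound on $q$ from Lemma~\ref{lemme:bgaus}(ii), with $\sup\pi<\infty$ (Assumption~A\ref{hyp:definition:pi}), and with $\pi(y)\geq\pi_{\min}$ on $B_0$, one obtains $\alpha(x,y)\geq c_\alpha>0$, whence
\[
\trunckernel(x,A)\geq\int_{A\cap B_0}\alpha(x,y)q(x,y)\,\rmd\nu(y)\geq c_\alpha c_q\,\nu(A\cap B_0)\eqsp,
\]
so $\tilde\nu\eqdef c_\alpha c_q\,\nu|_{B_0}$ is the desired minorizing measure.

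For \textit{(ii)}, aperiodicity is immediate: applying \textit{(i)} to $\overline{B_0}$, which is a compact subset of $S_{m_0}$, yields $\trunckernel(x,B_0)\geq c_\alpha c_q\,\nu(B_0)>0$ for every $x\in\overline{B_0}$, so the chain has a self-return-time-one loop on $B_0$. For $\psi$-irreducibility with reference measure $\psi=\pi\,\rmd\nu$, I observe that $q(x,y)>0$ for every $x\in\Rset^P$ and every $y$ in the support of $\nu$ (since $p>0$ on all of $\Rset^T$ and $f$ is a strictly positive Gaussian-type density), and $\alpha(x,y)>0$ as soon as $\pi(y)>0$ and $q(y,x)>0$; hence for any Borel $A$ with $\pi(A)>0$ and any $x\in\Rset^P$, the integrand in $\int_A\alpha(x,y)q(x,y)\,\rmd\nu(y)$ is strictly positive on a set of positive $\nu$-measure, giving $\trunckernel(x,A)>0$.

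The main obstacle is the transdimensional bookkeeping: when $x\in C\cap S_m$ and $y\in B_0\subset S_{m_0}$ with $m\neq m_0$, the density $q(x,y)$ mixes the thresholding probabilities $p(\tilde\mu_i(x))$ for $i\notin I_{m_0}$ with the Gaussian-type factors $f(\tilde\mu_i(x),y_i)$ for $i\in I_{m_0}$, and the composition of these two blocks of factors changes with $I_m\cap I_{m_0}$. However, $\M$ is finite, so the uniform lower bounds reduce to a finite case analysis; and the Gaussian sandwich provided by Lemma~\ref{lemme:bgaus} precisely replaces the shrinkage-thresholding proposal by a standard Gaussian proposal on each stratum, so the obstacle is really one of notation rather than substance.
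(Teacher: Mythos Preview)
Your argument is correct and follows essentially the same route as the paper: use the Gaussian sandwich of Lemma~\ref{lemme:bgaus} together with compactness (of $C\cap S_m$ and of the target set) to bound $q$, $q^{-1}$, and $\alpha$ uniformly from below, then read off the minorizing measure. The paper phrases the minorization with a general auxiliary set $K$ having compact slices $K\cap S_m$, whereas you specialize to $K=B_0$ inside a single stratum $S_{m_0}$; this is enough for the one-small property and makes the bookkeeping a bit lighter. Two cosmetic points: the hypothesis ``$\omega_{m_0}>0$'' belongs to the Bayesian example rather than to A\ref{hyp:definition:pi}--A\ref{hyp:cone:pi} (positivity of $\pi$ is what you actually use and is implicit in A\ref{hyp:definition:pi}); and for the aperiodicity step you should choose $B_0$ with $\overline{B_0}\subset S_{m_0}$, otherwise $\overline{B_0}\cap S_{m'}$ need not be compact for $m'\neq m_0$ and part~\textit{(i)} does not apply to $\overline{B_0}$ as stated.
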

\begin{proof}
  For notation simplicity, we drop the dependency in $\sto$ {\em
    \eqref{prop:small:set}}. We set $\nu = \sum_{m \in \M} \nu_m$.  Let $C$ and
  $K$ be two Borel sets of $\Rset^\dimstate$ such that $\nu(K) >0$ and for any
  $m \in \M$, $C \cap S_m$ and $K \cap S_m$ are compact subsets of $S_m$.
  Since $\Rset^\dimstate = \bigcup_{m \in \M} S_m$, we have
\begin{align*}
\inf \limits_{ x \in C} \trunckernel(x,A) = \inf \limits_{m \in \M} \inf \limits_{ x \in C \cap S_m} \trunckernel( x,A) \eqsp,
\end{align*}
so that it is enough to establish a minorization on the kernel for any $x \in C
\cap S_{m_\star}$ whatever $m_\star \in \M$. Let $m_\star \in \M$.  By
definition of $\trunckernel$, $q$ (see (\ref{eq:local:q})) and $\nu$
\[
\trunckernel( x,A)  \geq \int_{A \cap K} \alpha( x, y) q( x, y) \dom( y)
\]
where, for any $x \in S_{m_\star}$ and $y \in S_m$, we have
\[
q(x,y)= \prod_{i \notin I_{m}} \rho ( \mu_i(x) ) \prod_{i \in I_{m}}
f_\sto\left( \mu_i(x), y_i\right)\eqsp.
\]
The latter inequality implies
\begin{equation*}
\trunckernel(x,A) \geq \sum_{m \in \M} \cigaus^{|m|} \prod_{i \notin I_m} \rho(\mu_i(x)) \times \int_{A \cap K \cap S_m} \alpha( x,y) \prod_{i \in I_m} g_{\vigaus}(x_i-y_i) \rmd y_i \eqsp,
\end{equation*}
where the last inequality follows from
Lemma~\ref{lemme:bgaus}.  For any $x \in
S_{m_\star}$ and $y \in S_m$, we have
\[
\alpha_\sto(x,y) = 1 \wedge \frac{\omega_{m} \pi_{m}(y) q(y,x)}{\omega_{m_\star}
\pi_{m_\star}(x) q(x,y)}\eqsp.
\]
There exists a compact set of $\Rset$ such that for any $x \in C \cap
S_{m_\star}$ and $y \in K \cap S_m$, $\mu_i(x)$ and $\mu_i(y)$ are in this
compact for any $i$. Hence,
A\ref{hyp:reg:pi}\eqref{hyp:reg:pi:pos}-\eqref{hyp:reg:pi:cont} and
Lemmas~\ref{lemme:bgaus} and
\ref{lem:minoration:probap} imply that there exists $\varepsilon_m >0$ such
that for any $x \in C \cap S_{m_\star}$ and $y \in K \cap S_m$,
\[
\alpha_\sto(x,y) \geq \varepsilon_m \eqsp, \qquad \inf_{i \in I_m}
g_{\vigaus}(x_i-y_i) \geq \varepsilon_m \eqsp.
\]
This yields for any $x \in C \cap S_{m_\star}$, $\trunckernel( x,A) \geq
\left(\inf_{m \in \M} \varepsilon_m \right) \int_{A} \un_{K} ( y) \rmd \nu(y)$, thus
concluding the proof.

{\em \eqref{prop:small:irreducibility}}: By~\cite[Lemma
1.1]{mengersen:tweedie:1996}, the Markov chain $\left(\state^n\right)_{n\ge 0}$
is psi-irreducible since for any $x,y \in \Rset^\dimstate$, $q(x,y)>0$ as a
consequence of Lemma~\ref{lemme:bgaus} and
strongly aperiodic since by Proposition~\ref{prop:small}\eqref{prop:small:set}
it possesses an accessible $1$-small set.
\end{proof}

For any measurable function $f: \Rset^p \to \Rset^+$, $\trunckernel f:
\Rset^\dimstate \to \Rset^+$ denotes $\trunckernel f(x)= \int
\trunckernel(x,\rmd z) f(z)$. Fix $\constV \in (0,1)$ and set $V:
\Rset^\dimstate \to \coint{1, \infty}$, $x \mapsto c_\constV \pi^{-\constV}(x)$.
Define the possible rejection region $R(x)$ by
\begin{align*}
R(x) \eqdef \{y \in \Rset^\dimstate: \pi(x) q(x,y) > \pi(y) q(y,x) \} \eqsp.
\end{align*}
We have
 \begin{equation} \label{eq:Majoration:Drift}
   \frac{\trunckernel V(x)}{V(x)}
\leq \sum_{m \in \M}  \left\{ T_m(x) + \int_{R(x) \cap S_m} q(x,y) \dom_m(y) \right\} \eqsp,
\end{equation}
where
\begin{align} \label{eq:def:tm}
  T_m(x) & \eqdef \int_{\Rset^{|m|}} \alpha_\sto(x, \expvec{z}{m}) \,
  \frac{\pi^{-\beta}(\expvec{z}{m})}{\pi^{-\beta}(x)} \, q(x,\expvec{z}{m}) \, \rmd z
  \eqsp.
\end{align}

\begin{lemma} \label{lemme:tm}
  For any $m \in \M$, $\limsup \limits_{\|x\| \to \infty} T_m(x) = 0$.
\end{lemma}
\begin{proof}
  The proof is adapted from~\cite{jarner:hansen:2000} and \cite{atchade:2006}.
  Let $m \in \M$ be fixed.  Define
  \begin{align*}
    \calB_m(x,a) & \eqdef \{z \in \Rset^{|m|}, \|z-\shrinkvec{x}{m}\| \leq a \} \eqsp,\\
    \calC_{m}(x) &\eqdef \{ z \in \Rset^{|m|}, \pi(\expvec{z}{m})=\pi(x) \} \eqsp, \\
    \calC_{m}(x,u) & \eqdef \{z + s n(z), |s| \leq u, z \in \calC_{m}(x) \}
    \eqsp, \\
    R_m(x) & \eqdef \Rset^{|m|} \setminus A_m(x) \eqsp,
  \end{align*}
where
\begin{equation*}
    A_m(x) \eqdef \{z \in \Rset^{|m|}, \pi(\expvec{z}{m}) q(\expvec{z}{m},x) \geq \pi(x) q(x,\expvec{z}{m}) \} \eqsp.
\end{equation*}
We decompose as follows
\[
T_m(x) \leq T_{m,1}(x,a) +\sum_{j=2}^4 T_{m,j}(x,a,u)\eqsp,
\]
where
\begin{align*}
 T_{m,1}(x,a) & \eqdef  \int_{\calB_m^c(x,a)} \alpha(x,\expvec{z}{m}) \frac{\pi^{-\beta}(\expvec{z}{m})}{\pi^{-\beta}(x)} q(x,\expvec{z}{m})  \rmd z \eqsp, \\
 T_{m,2}(x,a,u) & \eqdef  \int_{ \calB_m(x,a) \cap \calC_{m}(x,u)} \hspace{-1cm}\alpha(x,\expvec{z}{m}) \frac{\pi^{-\beta}(\expvec{z}{m})}{\pi^{-\beta}(x)} q(x,\expvec{z}{m})  \rmd z \eqsp, \\
 T_{m,3}(x,a,u) &\eqdef \int_{A_m(x)\cap \calB_m(x,a) \cap \calC^c_{m}(x,u)} \hspace{-.2cm}\frac{\pi^{-\beta}(\expvec{z}{m})}{\pi^{-\beta}(x)} q(x,\expvec{z}{m})  \rmd z \eqsp, \\
 T_{m,4}(x,a,u) &\eqdef \int_{R_m(x)\cap \calB_m(x,a) \cap \calC^c_{m}(x,u)}
  \hspace{-.2cm}\frac{\pi^{1-\beta}(\expvec{z}{m})}{\pi^{1-\beta}(x)} q(\expvec{z}{m},x) \rmd
  z\eqsp.
\end{align*}
We prove that we may choose the constant $C>0$ large enough so  that for any $\epsilon >0$ there exists
$M>0$ such that $\sup_{\|x\| \geq M} T_m(x) \leq C \epsilon$. Since $\epsilon$
is arbitrarily small, this yields the lemma. Note that for any $z \in
\Rset^{|m|}$,
\begin{equation}
  \label{eq:bound:partout}
  \alpha_\sto(x,\expvec{z}{m}) \frac{\pi^{-\beta}(\expvec{z}{m})}{\pi^{-\beta}(x)} \leq \left( \frac{q(\expvec{z}{m},x)}{q(x,\expvec{z}{m})}\right)^\beta \eqsp.
\end{equation}
\paragraph{Control of $T_{m,1}$}
By (\ref{eq:bound:partout}), $ T_{m,1}(x,a)\leq \int_{\calB_m^c(x,a)}
q(x,\expvec{z}{m})^{1-\beta} q(\expvec{z}{m},x)^{\beta} \rmd z$. By (\ref{eq:local:q})
and Lemma~\ref{lemme:bgaus}, there exists a
constant $C>0$ such that
\begin{multline*}
 T_{m,1}(x,a)  \leq C \cgaus^{|m|(1-\beta)}  \times \int_{\calB_m^c(x,a)} \prod_{i} g_{\vgaus}\left( ( \shrinkvec{x}{m} )_i - y_i\right)^{1-\beta}  \rmd y_i \\
 \leq C \cgaus^{|m| (1-\beta)} \int_{\calB_m^c(0,a)} \prod_{i} g_{\vgaus}(y_i)^{1-\beta}  \rmd y_i \eqsp.
\end{multline*}
Therefore, for any $\epsilon >0$, there exists $a>0$ such that $\sup_{x \in
  \Rset^\dimstate} T_{m,1}(x,a) \leq \epsilon$.

\paragraph{Control of $T_{m,2}$}
By (\ref{eq:bound:partout}), $ T_{m,2}(x,a,u) \leq \int_{\calB_m(x,a) \cap
  \calC_{m}(x,u)} q(x,\expvec{z}{m})^{1-\beta} q(\expvec{z}{m},x)^{\beta} \rmd
z$.  By A\ref{hyp:superexp:pi}, the Lebesgue measure of $\mathcal{B}_m(x,a)
\cap \calC_{m}(x,u)$ can be made arbitrarily small - independently of $x \in
\Rset^\dimstate$ - when $u$ is small enough (see~\cite[Proof of Theorem
4.1]{jarner:hansen:2000} for details). Therefore, since $q$ is bounded (see
Corollary~\ref{lemme:bgaus:maj:q}), for any $\epsilon>0$, there
exists $u>0$ such that for any $a>0$: $ \sup_{x \in \Rset^\dimstate}
T_{m,2}(x,a,u) \leq \epsilon$.

\paragraph{Control of $T_{m,3}$}
Set $d_r(u) \eqdef \sup_{\|x\| \geq r} \pi(x + u \, n(x))/\pi(x)$.  By
A\ref{hyp:superexp:pi}, for any $\epsilon, u>0$, there exists $r>0$ large
enough so that $\left(d_{r-u}(u) \right)^{1-\beta} \vee
\left(d_r(u)\right)^{1-\beta}\leq \epsilon$.  By A\ref{hyp:reg:pi}, $\sup
\limits_{z \in \calB_m(0,r)} \pi(\expvec{z}{m})^{-\beta} < \infty$, so that
by corollary~\ref{lemme:bgaus:maj:q}
\begin{align*}
  \sup_{x \in \Rset^\dimstate} \int_{\mathcal{I}_m(x,a,u,r)  }
  q(x,\expvec{z}{m}) \pi^{-\beta}(\expvec{z}{m}) \rmd z < \infty \eqsp,
\end{align*}
where
\begin{equation*}
\mathcal{I}_m(x,a,u,r) \eqdef A_m(x) \cap \calB_m(x,a)
 \cap \calC^c_{m}(x,u) \cap  \calB_m(0,r) \eqsp.
\end{equation*}
A\ref{hyp:reg:pi}\eqref{hyp:reg:pi:lim} implies that
\[
\limsup_{\|x \| \to \infty} \int_{\mathcal{I}_m(x,a,u,r) }
\frac{\pi^{-\beta}(\expvec{z}{m})}{\pi^{-\beta}(x)} q(x,\expvec{z}{m}) \rmd z =
0 \eqsp.
\]
Moreover, by definition of $A_m(x)$, for any $z \in A_m(x)$ it holds
\[
\frac{\pi^{-\beta}(\expvec{z}{m})}{\pi^{-\beta}(x)} q(x,\expvec{z}{m}) \leq
\frac{\pi^{1-\beta}(\expvec{z}{m})}{\pi^{1-\beta}(x)} q(\expvec{z}{m},x) \eqsp;
\]
by corollary~\ref{lemme:bgaus:maj:q}, there exists a
constant $C$ such that for any $x \in \Rset^\dimstate$ and $z \in A_m(x)$
\begin{equation*}
\frac{\pi^{-\beta}(\expvec{z}{m})}{\pi^{-\beta}(x)} q(x,\expvec{z}{m}) 
\leq C \left(
  \frac{\pi^{-\beta}(\expvec{z}{m})}{\pi^{-\beta}(x)} \wedge
  \frac{\pi^{1-\beta}(\expvec{z}{m})}{\pi^{1-\beta}(x)} \right)\eqsp.
\end{equation*}
This yields there exists $C_\star$ such that for any $a,u,r>0$,
\begin{multline*}
  \int_{A_m(x)\cap \calB_m(x,a) \cap \mathcal{J}_m(x,u,r)}
  \hspace{-.2cm}\frac{\pi^{-\beta}(\expvec{z}{m})}{\pi^{-\beta}(x)} q(x,\expvec{z}{m})\rmd z  \\
  \leq C_\star \, \left( \sup_{z \in \mathcal{J}_m(x,u,r)}
    \frac{\pi^{\beta}(x)}{\pi^{\beta}(\expvec{z}{m})} \wedge\sup_{z \in
      \mathcal{J}_m(x,u,r)}
    \frac{\pi^{1-\beta}(\expvec{z}{m})}{\pi^{1-\beta}(x)} \right) \eqsp,
\end{multline*}
where
\[
\mathcal{J}_m(x,u,r) \eqdef \calC^c_{m}(x,u)\cap \calB^c_m(0,r) \eqsp.
\]
Let $z \in \calC_m^c(x,u) \cap \{z: \pi(\expvec{z}{m}) < \pi(x) \}$. By
A\ref{hyp:reg:pi}\eqref{hyp:reg:pi:cont}, $h: s \mapsto \pi(\expvec{z}{m} - s \
n(\expvec{z}{m})) - \pi(x)$ is continuous, and by definition of
$\calC_m^c(x,u)$, $h(s) \neq 0$ for any $0 \leq s \leq u$.  Since $h(0)<0$ (we
assumed that $\pi(\expvec{z}{m}) < \pi(x)$), this implies that $h(u)<0$ i.e.
$\pi(\expvec{z}{m} - s n(\expvec{z}{m})) \leq \pi(x)$.  Then,
\begin{equation*}
\sup_{z \in \calC^c_m(x,u) \cap \calB^c_m(0,r)} \frac{\pi
    (\expvec{z}{m})}{\pi(x)}  \leq \frac{\pi(\expvec{z}{m})}{\pi(\expvec{z}{m} - s n(\expvec{z}{m}))}
  \leq d_{r-u}(u) \eqsp.
\end{equation*}
If $z\in \calC_m^c(x,u) \cap \{z: \pi(\expvec{z}{m}) \geq \pi(x) \}$, we obtain
similarly that $ \pi(x)/\pi(\expvec{z}{m}) \leq d_{r}(u)$. Hence, we
established that
\begin{align*}
  \sup_{z \in \calC^c_m(x,u) \cap \calB^c_m(0,r)}
  \frac{\pi(\expvec{z}{m})}{\pi(x)} \leq d_{r}(u) \vee d_{r-u}(u)\eqsp.
\end{align*}
As a conclusion, there exists $C_\star>0$ and for any $\epsilon,a,u>0$, there
exists $M>0$ such that $\sup_{\|x\| \geq M} T_{m,3}(x,a,u) \leq C_\star
\epsilon$.

\paragraph{Control of $T_{m,4}$}
Following the same lines as for the control of $T_{m,3}(x,a,u)$, it can be
shown that there exists $C_\star>0$ and for any $\epsilon,a,u>0$, there exists
$M>0$ such that $\sup_{\|x\| \geq M} T_{m,4}(x,a,u) \leq C_\star \epsilon$.
\end{proof}

\begin{lemma}
  \label{lemme:cone}
  Let $u,b,\epsilon,R$ be given by A\ref{hyp:cone:pi} and $W_m(x)$ be defined
  by \eqref{eq:definition:cone}. There exists $r >R$ such that for any $m \in
  \M$ and $x \in S_m \cap \{\|x \| \geq r\}$, $W_m(x) \subset \{y \in S_m,
  \alpha_\sto(x, y) =1 \}$.
\end{lemma}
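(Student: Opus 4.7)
The plan is to show that for $y \in S_m$ with $y_m \in W_m(x)$, the Metropolis--Hastings ratio
\[
\rho(x,y) \eqdef \frac{\pi_m(y)}{\pi_m(x)} \cdot \frac{q(\psi(m,y),x)}{q(x,\psi(m,y))}
\]
exceeds $1$ once $\|x\|_2$ is sufficiently large, uniformly in $y \in W_m(x)$. I would split the analysis into (a) a uniform lower bound on the proposal ratio and (b) blow-up of the target ratio.

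For (a), note that $x$ and $\psi(m,y)$ lie in the \emph{same} stratum $S_m$, so by (\ref{eq:local:q})
\[
\frac{q(\psi(m,y),x)}{q(x,\psi(m,y))} = \prod_{i \notin I_m} \frac{p(\tilde \mu_i(\psi(m,y)))}{p(\tilde \mu_i(x))} \; \prod_{i \in I_m} \frac{f(\tilde \mu_i(\psi(m,y)), x_i)}{f(\tilde \mu_i(x), y_i)} \eqsp.
\]
The zero-component factors are handled by Lemma~\ref{lem:minoration:probap}, which provides a uniform positive lower bound on $p(\tilde \mu_i(\cdot))$; since trivially $p \leq 1$, the first product is $\geq p_0^{P-|m|}$ independently of $x$. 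For the nonzero-component factors, Lemma~\ref{lemme:bgaus}\eqref{lemme:bgaus:encadrement} sandwiches each $f(\tilde \mu_i(\cdot), \cdot)$ between the Gaussian densities $\cigaus g_{\vigaus}$ and $\cgaus g_{\vgaus}$ evaluated at the appropriate displacement. The geometry of the cone $W_m(x)$ (apex at distance $u$, aperture $b-u$) forces $\|y_m - x_m\|_2 \leq b$, hence $|x_i - y_i| \leq b$ for each $i \in I_m$, so the Gaussian ratio $g_{\vigaus}(x_i-y_i)/g_{\vgaus}(y_i-x_i)$ is bounded below by an explicit constant depending only on $\sigma,\gamma,D,b$. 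Call the resulting uniform lower bound $C' > 0$.

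For (b), A\ref{hyp:cone:pi} gives the key inequality $\pi_m(y) \geq \pi_m(x - u\,n(x))$ for all $y \in W_m(x)$ as soon as $\|x\|_2 \geq R$. Setting $x' \eqdef x - u\,n(x)$, the crucial observation is that $x'$ and $x$ are positively collinear, so $n(x') = n(x)$ and $x' + u\,n(x') = x$, whence
\[
\frac{\pi_m(x)}{\pi_m(x - u\,n(x))} = \frac{\pi_m(x' + u\,n(x'))}{\pi_m(x')} \eqsp.
\]
Since $\|x'\|_2 \geq \|x\|_2 - u$, A\ref{hyp:superexp:pi} applied at $x'$ (with $s = u$) shows that this ratio tends to $0$ as $\|x\|_2 \to \infty$, uniformly over $m \in \M$. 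Combining (a) and (b), one can choose $r > R$ large enough that $\pi_m(x)/\pi_m(x - u\,n(x)) \leq C'$ for all $\|x\|_2 \geq r$; then $\rho(x,y) \geq 1$, so $\alpha(x, \psi(m,y)) = 1$.

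The main obstacle is ensuring that the lower bound $C'$ on the proposal ratio truly does not depend on $x$. This rests on the gradient-truncation bound $\|\tilde \mu(x) - x\|_2 \leq D\sigma^2/2$, which is what makes both the $p$-factors controllable (via Lemma~\ref{lem:minoration:probap}) and the $f$-factors sandwichable by Gaussians in the \emph{displacement} $y - x$ rather than in $y - \tilde \mu(x)$. Once $C'$ is uniform, the super-exponential decay in A\ref{hyp:superexp:pi} easily wins against any finite constant, making the final choice of $r$ essentially automatic.
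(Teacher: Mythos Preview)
Your proposal is correct and follows essentially the same route as the paper's proof: both bound the proposal ratio below by a uniform constant via Lemma~\ref{lem:minoration:probap} (for the $p$-factors) and Lemma~\ref{lemme:bgaus}\eqref{lemme:bgaus:encadrement} (for the $f$-factors, using $\|y_m - x_m\|_2 \leq b$), then invoke A\ref{hyp:cone:pi} to replace $\pi_m(y)$ by $\pi_m(x-u\,n(x))$ and A\ref{hyp:superexp:pi} to beat the constant. Your justification of the super-exponential step via the substitution $x' = x - u\,n(x)$ is in fact more explicit than the paper's, which simply asserts that $r$ can be chosen large enough.
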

\begin{proof}
  The proof is adapted from~\cite{jarner:hansen:2000}. Let $m \in \M$ and
  $x \in S_m$ such that $\| x \| \geq r$ for some $r>R$ to be fixed later (the
  constant $R$ is given by A\ref{hyp:cone:pi}).  We first prove that there
  exists a positive constant $C_b$ such that
\begin{equation}
  \label{eq:constanteCb}
\frac{\pi(x)}{\pi(x-u n(x))}  \leq C_b \leq \inf_{z \in \calB_m(x,b)} \frac{q(\expvec{z}{m}, x)}{q(x, \expvec{z}{m})} \eqsp.
\end{equation}
By (\ref{eq:local:q}), Lemma~\ref{lemme:bgaus}
and Lemma~\ref{lem:minoration:probap}, there exist $C, C_b>0$ - independent of
$x \in S_m$ - such that
\begin{equation*}
  \inf_{z \in \calB_m(x,b)} \frac{q(\expvec{z}{m},x)}{q(x,\expvec{z}{m})} \geq
  C^{\dimstate-|m|} \cigaus^{|m|} \cgaus^{-|m|}  \times \inf_{z \in \calB_m(x,b)} \prod_{i \in I_m} \frac{g_{\vigaus}(x_i -
    z_i)}{g_{\vgaus}(x_i - z_i)} \geq C_b\eqsp.
\end{equation*}
By A\ref{hyp:superexp:pi}, we can choose $r$ large enough so that for all
$\|x\| \geq r$, $\pi(x)/\pi(x - u n(x)) \leq C_b$. This yields
(\ref{eq:constanteCb}). Let $z \in W_m(x)$.  Then, $\|z-x\| \leq b$ so that $z
\in \calB_m(x,b)$. Hence, by (\ref{eq:constanteCb}),
$q(\expvec{z}{m},x)/q(x,\expvec{z}{m}) \geq C_b$. In addition,
\begin{equation*}
   \frac{\pi(\expvec{z}{m})}{\pi(x)}  =
  \frac{\pi(\expvec{z}{m})}{\pi(x-u n(x))} \frac{\pi(x-un(x))}{\pi(x)}  \geq \frac{\pi(\expvec{z}{m})}{\pi(x-u n(x))} \frac{1}{C_b} \geq
  \frac{1}{C_b} \eqsp,
\end{equation*}
where in the last inequality we used A\ref{hyp:cone:pi}. Hence,
\[
\frac{\pi(\expvec{z}{m})}{\pi(x)} \frac{q(\expvec{z}{m},x)}{q(x,\expvec{z}{m})}
\geq 1\eqsp,
\]
and $\alpha_\sto(x, \expvec{z}{m})=1$ thus showing the lemma.
\end{proof}

\begin{lemma} \label{lemme:remaining} 
  $ \limsup \limits_{\|x\| \to \infty} \int_{R(x)} q(x,y) \dom(y) < 1$, where
  $\rmd \nu = \sum_m \un_{S_m} \rmd \nu_m$.
\end{lemma}
\begin{proof}
  Let $x \in S_{m_\star}$.  By definition of $\dom$, by
  Lemma~\ref{lemme:bgaus} and by
  Lemma~\ref{lem:minoration:probap}, there exists a constant $C>0$ such that
\begin{align*}
  1- \int_{R(x)} q(x,y) \dom(y)  = \sum_{m \in \M} \int_{A_m(x)} q(x,\expvec{z}{m}) \rmd z
  & \geq \sum_{m \in \M} \cigaus^{|m|}  \mathcal{G}_m(x) \prod_{i \notin I_m} \rho(\mu_i(x))\eqsp,  \\
  & \geq \cigaus^{|m_\star|} \ \mathcal{G}_{m_\star}(x)  \prod_{i \notin I_{m_\star}} \rho(\mu_i(x)) \eqsp, \\
  & \geq C \, \cigaus^{|m_\star|} \ \mathcal{G}_{m_\star}(x) \eqsp,
\end{align*}
where
\[
\mathcal{G}_m(x) \eqdef \int_{A_m(x)} \prod_{i \in I_m} g_{\vigaus}(x_i-y_i)
\rmd y_i\eqsp.
\]
By Lemma~\ref{lemme:cone}, for any $x \in S_{m_\star}$ large enough,
\[
1- \int_{R(x)} q(x,y) \dom(y) \geq C \, \cigaus^{|m_\star|} I_{m_\star}(x)
\]
where, denoting $A-x \eqdef \{z, z+x \in A\}$,
\begin{equation}
\label{eq:dependanceonx}
I_{m_\star}(x)= \int_{W_{m_\star}(x)-x} \left( \prod_{i \in I_{m_\star}}
  g_{\vigaus}(y_{i}) \rmd y_{i} \right) \times \left( \prod_{i \notin I_{m_\star}}
  \delta_0(\rmd y) \right) \eqsp.
\end{equation}
Note that
\begin{equation*}
  W_{m_\star}(x)-x = \{ - u \, n(x) - s \xi; 0<s<b-u,\,\zeta \in S_{m_\star}, \|\zeta\|=1, \|\zeta-n(x)\| \leq \ccone \} \eqsp,
\end{equation*}
so that the integrals in (\ref{eq:dependanceonx}) depend on $x$ only through
$m_\star$. Since $\M$ is finite, there exists a constant $C'>0$
independent of $x$ such that for any $m \in \M$ and $x \in S_m$,
\begin{equation*}
  \int_{W_{m}(x)} \left( \prod_{i \in I_{m}} g_{\vigaus}(x_{i}-y_{i}) \rmd
    y_{i} \right) \times \left( \prod_{i \notin I_{m}} \delta_0(\rmd y) \right) \geq
  C' \eqsp.
\end{equation*}
\end{proof}

\begin{proposition}
  \label{prop:drift}
  $\limsup_{\|x\| \to \infty} PV(x) / V(x) < 1$.
\end{proposition}
\begin{proof}
  The result follows from \eqref{eq:Majoration:Drift} and Lemmas~\ref{lemme:tm}
  and \ref{lemme:remaining}.
\end{proof}

\bibliographystyle{plain}

\end{document}